    \newcommand{\FF}{\mathbb{F}} \newcommand{\ZZ}{\mathbb{Z}} \newcommand{\NN}{\mathbb{N}} 
  \newcommand{\cV}{\mathcal{V}} \newcommand{\cW}{\mathcal{W}}   \newcommand{\med}{\;|\;} 
\DeclareMathOperator{\supp}{\mathrm{Supp}\,} \DeclareMathOperator{\Aut}{\mathrm{Aut}}     
\DeclareMathOperator{\End}{\mathrm{End}}   
\DeclareMathOperator{\bAut}{\mathbf{Aut}}  
  \DeclareMathOperator{\lspan}{\mathrm{span}} \DeclareMathOperator{\gl}{\mathfrak{gl}}
\DeclareMathOperator{\kan}{\mathfrak{K}}
  \newcommand{\GL}{\mathrm{GL}}  
\newcommand{\cA}{\mathcal{A}} 
\DeclareMathOperator{\Hom}{\mathrm{Hom}} \DeclareMathOperator{\id}{\mathrm{id}}
\DeclareMathOperator{\im}{\textnormal{im}}
 \newcommand{\cM}{\mathcal{M}}
\newcommand{\instr}{{\mathfrak{instr}}}
\newcommand{\Tr}{\mathsf{T}} 
\DeclareMathOperator{\bGL}{\mathbf{GL}} 
\DeclareMathOperator{\bG}{\mathbf{G}}
\DeclareMathOperator{\bT}{\mathbf{T}}
\DeclareMathOperator{\FLM}{\mathbf{FLM}} \DeclareMathOperator{\FLSM}{\mathbf{FLSM}}
\DeclareMathOperator{\GJP}{\mathbf{GJP}} \DeclareMathOperator{\GJSP}{\mathbf{GJSP}}
\DeclareMathOperator{\undHom}{\mathrm{\underline{Hom}}}
\DeclareMathOperator{\undEnd}{\mathrm{\underline{End}}}
\DeclareMathOperator{\undAut}{\mathrm{\underline{Aut}}}
\DeclareMathOperator{\bUndAut}{\mathbf{\underline{Aut}}}
\newtheorem{theorem}{Theorem}
\newtheorem{proposition}[theorem]{Proposition}
\theoremstyle{definition}
\newtheorem{df}[theorem]{Definition}
\newtheorem{example}[theorem]{Example}
\newtheorem{notation}[theorem]{Notation}
\theoremstyle{remark}
\newtheorem{remark}[theorem]{Remark}
\numberwithin{equation}{section} 
\numberwithin{theorem}{section} 
\begin{document}

\title[On the Faulkner construction for generalized Jordan superpairs]{On the Faulkner construction \\ for generalized Jordan superpairs}

\author[D. Aranda-Orna]{Diego Aranda-Orna}
\address{Departamento de Matem\'aticas y Computaci\'on,
Universidad de La Rioja, 26006, Logro\~no, Spain}
\email{diego.aranda.orna@gmail.com}

\thanks{The author is supported by grant MTM2017-83506-C2-1-P (AEI/FEDER, UE)}
\date{}

\begin{abstract}
In this paper, the well-known Faulkner construction is revisited and adapted to include the super case, which gives a bijective correspondence between generalized Jordan (super)pairs and faithful Lie (super)algebra (super)modules, under certain constraints (bilinear forms with properties analogous to the ones of a Killing form are required, and only finite-dimensional objects are considered). We always assume that the base field has characteristic different from $2$.

It is also proven that associated objects in this Faulkner correspondence have isomorphic automorphism group schemes. Finally, this correspondence will be used to transfer the construction of the tensor product to the class of generalized Jordan (super)pairs with ``good'' bilinear forms. 
\end{abstract}

\maketitle

\section{Introduction} \label{section.introduction}

An important connection between $\ZZ$-graded Lie algebras and other structures is given by the well-known Kantor construction. The Kantor construction applied to a Jordan (super)pair produces a $\ZZ$-graded Lie (super)algebra which is $3$-graded (that is, the support of the grading is contained in $\{-1,0,1\}$) (see \cite{K97}), and applied to Kantor pairs produces a $\ZZ$-graded Lie algebra that is $5$-graded (the support of the grading is contained in $\{-2,-1,0,1,2\}$) (see \cite{AF99}). Note that the Kantor construction for Jordan algebras, Jordan triple systems, and Jordan pairs, is usually called the Tits-Kantor-Koecher construction, or TKK-construction. A classification of the finite-dimensional simple Jordan pairs and superpairs can be found in \cite[Chap.4]{L75} and \cite{K97}, respectively. A different version of the Kantor construction, defined by a quotient of a universal graded Lie algebra (\cite{K70}, \cite{K72}, \cite[\S 3]{AK88}, \cite{AK98}), gives a bijective correspondence between the class of $\ZZ$-graded Lie algebras with a grade-reversing involution (and generated by the subspaces $L_1$ and $L_{-1}$) and the class of generalized Jordan triple systems; this has also been used in the super case \cite{BG79}, \cite[Th.~2.1]{P10} (some details of this Kantor construction are usually omitted and, unfortunately, the original references \cite{K70}, \cite{K72} are not easy to find and have not been translated from Russian). Some constructions of Lie algebras from ternary algebras appear in \cite{A76}, \cite{F71}. Kantor pairs are also called generalized Jordan pairs of second order (see \cite{KM20} and references therein). For the basic definitions related to Lie superalgebras see, e.g., \cite{CW12}, \cite{K77}.

\bigskip

The Faulkner construction has been used in Physics, where generalized Jordan triple (super)systems with ``good'' bilinear forms are referred to with different names ($M2$-branes $3$-algebras \cite{NP09}, metric Lie $3$-algebras \cite{MFMR09}, \cite{F09}, three-algebras \cite{P10}).

The original Faulkner construction \cite{F73} uses a generalized Jordan pair ``in disguise'' (one of the triple products is dropped) with a ``good'' bilinear form, and produces a Lie module where the Lie algebra has a ``good'' bilinear form, and shows how to recover the original triple product from the module (although it was not stated as a correspondence between both isomorphic classes of objects since the module was not required to be faithful). The Faulkner construction is often given in terms of triple systems instead of pairs (e.g., see \cite{F09}, \cite{MFMR09}), and has also been used to study anti-Jordan pairs \cite{FF80}. The term \textit{generalized Jordan (super)pair} has not been found explicitly in the literature.

A polished and detailed proof of the Faulkner construction, that is adapted to the super case, for pairs instead of triple systems, and stated as a correspondence, has not been found in the literature. Giving a nice reference for these results is one of the aims of this work. Note that a construction in terms of superpairs is more general than in terms of triple supersystems, because generalized Jordan superpairs can be constructed from generalized Jordan triple supersystems (and structurable superalgebras). Since the proof in \cite{F73} was rather sketchy and most computations were omitted, some related results have appeared later (e.g., see \cite{MFMR09}), and some of them are part of the proof if the construction is formulated as a bijective correspondence.

\medskip

Note that the author was studying how to extend ``good'' bilinear forms from Kantor pairs to their Kantor-Lie algebras, which led to rediscover the Faulkner construction for pairs.

\bigskip

Open problems: From an optimistic point of view, a Kantor construction for generalized Jordan (super)pairs, which includes the cases mentioned above as particular cases, is expected to exist (which perhaps is already done in the literature). If the answer is positive, then the Kantor and Faulkner constructions together would produce a Kantor-Faulkner correspondence between $\ZZ$-graded Lie superalgebras and faithful Lie supermodules, under certain constraints (``good'' bilinear forms are required, and the $\ZZ$-graded Lie superalgebra $L$ must be generated by the subspaces $L_1$ and $L_{-1}$). Giving a detailed description of this Kantor-Faulkner correspondence, with necessary and sufficient conditions, is left as an open problem. Furthermore, the Faulkner and Kantor constructions remain to be studied in the color case, that is, where Lie color algebras appear instead of Lie (super)algebras.

\bigskip

This paper is structured as follows:

\medskip

In Section~\ref{defsSection} we recall some basic definitions and introduce the notation used in further sections. A reformulation of the Faulkner construction for generalized Jordan superpairs is given in Section~\ref{sectionFaulkner}, where we also show that the automorphism group schemes are preserved in the correspondence. Finally, in Section~\ref{sectionTensor} we transfer the tensor product operator from the class of faithful Lie supermodules (with ``good'' bilinear forms) to the class of generalized Jordan superpairs (with ``good'' bilinear forms).

As an application of the original results in this paper, note that some results related to automorphism group schemes can be transferred between generalized Jordan superpairs and faithful Lie supermodules, with the restrictions given by the correspondence (e.g., classifications of gradings and orbits). (An introduction to automorphism group schemes can be found in \cite{W79}, and results that allow to transfer classifications of gradings using automorphism group schemes can be found in \cite{EKmon}.) Tensor products are a natural tool for the study of generalized Jordan superpairs (e.g., by decomposing a simple object as a tensor product of others of lesser dimension, or to construct new examples as tensor product of others).

\section{Preliminaries} \label{defsSection}

The base field $\FF$ will be always assumed to be of characteristic different from $2$. In this section we will recall the basic definitions used in further sections.

\subsection{Lie superalgebras}

Let $G$ be a group. A $G$-grading on a vector space $V$ is a vector space decomposition $\Gamma:\;V = \bigoplus_{g\in G} V_g $. If $\cA$ is an $\FF$-algebra, then a \emph{$G$-grading} on $\cA$ is a vector space decomposition $$ \Gamma:\;\cA=\bigoplus_{g\in G} \cA_g $$
such that $\cA_g \cA_h\subseteq \cA_{gh}$ for all $g,h\in G$. An algebra $\cA$ with a $G$-grading is called a {\em $G$-graded algebra}. The subspace $\cA_g$ is called {\em homogeneous component of degree $g$}. The nonzero elements $x\in\cA_g$ are said to be {\em homogeneous of degree $g$}, and we write $\deg(x) = g$. The set $\supp \Gamma := \{g\in G \med \cA_g\neq 0\}$ is called the {\em support} of the grading. 

A nonzero linear map $f \colon V \to W$ between $G$-graded vector spaces is called \emph{homogeneous of degree $g$} if $f(V_h) \subseteq W_{gh}$ for all $h\in G$; thus $\Hom(V, W)$ becomes a $G$-graded vector space. In particular, if $V$ is $G$-graded, then $\End(V)$ becomes a $G$-graded algebra.

\smallskip

A \emph{superalgebra} is a $\ZZ_2$-graded algebra $\cA = \cA_{\bar0} \oplus \cA_{\bar1}$. The homogeneous subspaces $\cA_{\bar0}$ and $\cA_{\bar1}$ are called, respectively, the \emph{even} and \emph{odd} subspaces. The degree map of the $\ZZ_2$-grading is denoted by $\varepsilon$, that is, $\varepsilon(x) := a$ if $0\neq x\in\cA_a$.
An element $0\neq x \in\cA$ is called \emph{even} (resp. \emph{odd}) if $\varepsilon(x) = \bar0$ (resp. $\varepsilon(x) = \bar 1$). We will usually write $(-1)^{\bar0} := 1$, $(-1)^{\bar1} := -1$. Also, for homogeneous elements in a $\ZZ_2$-grading we will denote
\begin{align}
\eta_{x,y} &:= (-1)^{\varepsilon(x)\varepsilon(y)}, \\
\eta_{x,y,z} &:= (-1)^{\varepsilon(x)\varepsilon(y)
+ \varepsilon(y)\varepsilon(z) + \varepsilon(z)\varepsilon(x)}.
\end{align}

A \emph{Lie superalgebra} is a superalgebra $L = L_{\bar0} \oplus L_{\bar1}$, with product denoted by $[\cdot,\cdot]$, such that 
\begin{align}
[x, y] &= - \eta_{x,y} [y,x],  \\
[x,[y,z]] &= [[x,y],z] + \eta_{x,y} [y,[x,z]],
\end{align}
for any homogeneous elements $x,y,z\in L$. Lie algebras are exactly the even Lie superalgebras (that is, with $L_{\bar1} = 0$).

Recall that if $\cA$ is an associative superalgebra, then $\cA$ becomes a Lie superalgebra with the Lie \emph{superbracket} defined by
\begin{equation}
[x,y] := xy - \eta_{x,y} yx
\end{equation}
for any homogeneous elements $x,y\in\cA$. Also, recall that if $\cA$ is a superalgebra, then $\End(\cA)$ becomes an associative superalgebra (where the homogeneous elements are the homogeneous maps). Therefore $\End(\cA)$ with the Lie superbracket becomes a Lie superalgebra, usually denoted by $\gl(m | n)$ where $\dim \cA_{\bar0} = m$ and $\dim \cA_{\bar1} = n$, or $\gl(\cA_{\bar0} | \cA_{\bar1})$.

Let $\cA$ be a superalgebra. A \emph{superderivation of degree $a$} is a homogeneous linear map $d \colon \cA \to \cA$ of degree $a\in\ZZ_2$ such that 
\begin{equation}
d(xy) = d(x)y + \eta_{d,x} x d(y)
\end{equation}
for any $x,y$ homogeneous in  $\cA$, where we write $\varepsilon(d) := a$. A \emph{superderivation} is the sum of a superderivation of degree $\bar0$ and a superderivation of degree $\bar1$. Note that if $\cA_{\bar1} = 0$, then the superderivations are exactly the derivations of $\cA$. It is well-known that the vector space of superderivations of $\cA$ becomes a Lie subsuperalgebra of $\gl(m|n)$, where $m$ and $n$ are respectively the even and odd dimensions of $\cA$.

\bigskip

Given a Lie superalgebra $L$ and a $\ZZ_2$-graded vector space $M = M_{\bar0} \oplus M_{\bar1}$ with a bilinear map $L \times M \to M$, $(x, v) \mapsto x \cdot v$, we say that $M$ is an \emph{$L$-supermodule} if $L_a \cdot M_b \subseteq M_{a+b}$ for any $a,b\in\ZZ_2$ and
\begin{equation} \label{supermodule}
[x, y] \cdot v = x \cdot (y \cdot v) - \eta_{x,y} y \cdot (x \cdot v)
\end{equation}
for any homogeneous elements $x,y\in L$, $v\in M$. Modules for Lie algebras are exactly the Lie superalgebra supermodules where $L_{\bar1} = 0$ and $M_{\bar1} = 0$.

\bigskip

Let $L$ be a Lie superalgebra and $b \colon L \times L \to \FF$ a bilinear form. We will say that $b$ is \emph{homogeneous} if $b(x,y) = 0$ for any homogeneous elements $x,y\in L$ with $\varepsilon(x) + \varepsilon(y) \neq \bar0$. If 
\begin{equation} \label{invariant}
b([x,y],z) = b(x,[y,z])
\end{equation}
for any $x,y,z\in L$, then $b$ is said to be \emph{invariant}. Also, if
\begin{equation} \label{supersymmetric_b}
b(x,y) = \eta_{x,y} b(y,x)
\end{equation}
for any homogeneous elements $x,y\in L$, then $b$ is said to be \emph{supersymmetric} (note that the restrictions of $b$ to $L_{\bar0}$ and $L_{\bar1}$ are symmetric and antisymmetric, respectively).

\bigskip

Let $L$ be a finite-dimensional Lie algebra and $M$ a finite-dimensional $L$-module. Recall that the \emph{dual $L$-module} is given by the dual space $M^*$ with the dual action $x \cdot f$ defined by
\begin{equation}
(x \cdot f)(v) := - f(x\cdot v)
\end{equation}
for any $x\in L$, $f\in M^*$, $v\in M$.

\smallskip

Now assume that $L$ is a finite-dimensional Lie superalgebra and $M$ a finite-dimensional $L$-supermodule. Consider the dual space $M^*$. Then $M^*$ inherits a dual $\ZZ_2$-grading such that the duality bilinear form is homogeneous (that is, $M_a$ and $M^*_a$ are paired and $M_a$ is orthogonal to $M^*_b$ if $a \neq b$). We will usually denote by $\langle\cdot,\cdot\rangle \colon M^* \times M \to \FF$ the pairing bilinear form. Define the \emph{left-dual} (or \emph{dual}) \emph{$L$-supermodule} of $M$ as the $\ZZ_2$-graded vector space $M^*$ with the dual action $x \cdot f$ given by
\begin{equation} \label{dualityAction}
\langle x \cdot f, v \rangle =
(x \cdot f)(v) := -\eta_{x,f} f(x\cdot v)
= -\eta_{x,f} \langle f, x\cdot v \rangle
\end{equation}
for any homogeneous elements $x\in L$, $f\in M^*$, $v\in M$. It will be denoted by $M^\gets$ or $M^*$. Similarly, define the \emph{right-dual $L$-supermodule} $M^\to$ of $M$ as the $\ZZ_2$-graded vector space $M^*$ with the dual action $x \cdot f$ given by
\begin{equation}
\langle x \cdot f, v \rangle =
(x \cdot f)(v) := -\eta_{x,v} f(x\cdot v)
= -\eta_{x,v} \langle f, x\cdot v \rangle
\end{equation}
for any homogeneous elements $x\in L$, $f\in M^*$, $v\in M$. It is easy to see that $(M^\gets)^\to \cong M \cong (M^\to)^\gets$ (left and right duals are inverses) and $(((M^\gets)^\gets)^\gets)^\gets \cong M$ and $(((M^\to)^\to)^\to)^\to \cong M$ (both dualizations have order dividing $4$). In particular, when $L_{\bar1} = 0$ and $M_{\bar1} = 0$, we have that $M^\gets \cong M^\to$.

Given a homogeneous element $\varphi\in\End(M)$, we define its \emph{left-dual} map $\varphi^\gets$ by means of
\begin{equation}
\langle \varphi^\gets(f), v \rangle = \eta_{\varphi, f} \langle f, \varphi(v) \rangle,
\end{equation}
for any homogeneous $f\in M^*$, $v\in M$. The map $\varphi^\gets$ may also be referred to as the \emph{dual} map of $\varphi$, and denoted by $\varphi^*$. The dual map $\varphi^\gets$ is well-defined for any $\varphi\in\End(M)$, since any endomorphism decomposes as sum of homogeneous elements. Similarly, we define the \emph{right-dual} of $\varphi$ by means of 
\begin{equation}
\langle \varphi^\to(f), v \rangle = \eta_{\varphi, v} \langle f, \varphi(v) \rangle,
\end{equation}
for any homogeneous $f\in M^*$, $v\in M$, $\varphi\in\End(M)$. In the case that $M_{\bar1} = 0$, we have that both $\varphi^\gets$ and $\varphi^\to$ coincide with the usual dual map $\varphi^*$. Also, it is clear that $(\varphi^\gets)^\to = \varphi = (\varphi^\to)^\gets$ (left and right dualizations are inverse processes). Besides $(((\varphi^\gets)^\gets)^\gets)^\gets = \varphi$ and $(((\varphi^\to)^\to)^\to)^\to = \varphi$ (both dualizing processes have order dividing $4$). Note that dualizing preserves parity of homogeneous endomorphisms.

\bigskip

If $M$ and $N$ are $L$-supermodules, the \emph{(left) tensor product supermodule} is defined by the vector space $M \otimes N$ with the action
\begin{equation} \label{leftTensorMods}
x \cdot (v \otimes w) := (x \cdot v) \otimes w + \eta_{x,v} v \otimes (x \cdot w)
\end{equation}
for each homogeneous $x\in L$, $v\in M$, $w \in N$. It will be denoted by $M \overleftarrow{\otimes} N$ or $M \otimes N$. The parity map that determines the $\ZZ_2$-grading is given by $\varepsilon(v \otimes w) := \varepsilon(v) + \varepsilon(w)$. The tensor product operator is associative, but $M \otimes N \ncong N \otimes M$ (unless $M_{\bar1} = 0 = N_{\bar1}$ or $L_{\bar1} = 0$). The \emph{right tensor product} can be defined similarly, and denoted by $M \overrightarrow{\otimes} N$. Note that $M \overleftarrow{\otimes} N \cong N \overrightarrow{\otimes} M$. Besides, $M \overleftarrow{\otimes} N = M \overrightarrow{\otimes} N$ if either $M_{\bar1} = 0 = N_{\bar1}$ or $L_{\bar1} = 0$.

\smallskip

It is easy to see that $(M \otimes N)^\gets \cong M^\gets \otimes N^\gets$. To show this, consider the bilinear form given by
\begin{equation} \label{tensorProductBilinearForms} \begin{split}
\langle\cdot,\cdot\rangle \colon & (M^\gets \otimes N^\gets) \times (M \otimes N) \to \FF \\
& \langle f \otimes g, v \otimes w \rangle := \eta_{g,v} \langle f,v \rangle \langle g,w \rangle,
\end{split} \end{equation}
where $f \in M^\gets$, $g\in N^\gets$, $v\in M$, $w\in N$ are homogeneous.
The bilinear map in \eqref{tensorProductBilinearForms} will be called the \emph{(left) tensor superproduct} of the corresponding pairing bilinear forms $M^* \times M \to \FF$ and $N^* \times N \to \FF$.
Then we have that 
\begin{align*}
\langle f \otimes g & , x \cdot (v \otimes w) \rangle =
\langle f \otimes g, (x \cdot v) \otimes w + \eta_{x,v} v \otimes (x \cdot w) \rangle \\
&= \eta_{g, x\cdot v} \langle f, x \cdot v\rangle \langle g, w \rangle
+ \eta_{x,v}\eta_{g,v} \langle f,v \rangle \langle g, x\cdot w \rangle \\
&= - \eta_{x,g}\eta_{g,v}\eta_{x,f} \langle x \cdot f, v \rangle \langle g,w \rangle
- \eta_{x,v}\eta_{g,v}\eta_{x,g} \langle f,v \rangle \langle x \cdot g, w \rangle \\
&= - (\eta_{x,g}\eta_{g,v}\eta_{x,f})\eta_{g,v}
\langle (x \cdot f) \otimes g, v \otimes w \rangle \\
& \quad - (\eta_{x,v}\eta_{g,v}\eta_{x,g})(\eta_{x,v}\eta_{g,v})
\langle f \otimes (x\cdot g), v \otimes w \rangle \\
&= - \langle \eta_{x,g}\eta_{x,f} (x \cdot f) \otimes g
+ \eta_{x,g} f \otimes (x \cdot g), v\otimes w \rangle \\
&= - \eta_{x,f}\eta_{x,g} \langle x \cdot (f \otimes g), v \otimes w \rangle,
\end{align*}
for any homogeneous elements $x \in L$, $f \in M^\gets$, $g\in N^\gets$, $v\in M$, $w\in N$,
which proves the isomorphism.

\bigskip

Let $M$ an $L$-supermodule and $M'$ an $L'$-supermodule, with $L$ and $L'$ Lie superalgebras. For $a\in\ZZ_2$, define
$\undHom^a\big((L,M),(L',M')\big)$ as the set consisting of pairs $(\varphi_0, \varphi^+)$, where $\varphi_0 \colon L \to L'$ is a superalgebra homomorphism (that is, an even algebra homomorphism), $\varphi^+ \colon M \to M'$ is a linear map of parity $a$ (that is, $\varphi^+(M_b) \subseteq M'_{a+b}$ for each $b\in\ZZ_2$), and where we also have
\begin{equation}
\varphi^+(x \cdot v) = \eta_{\varphi^+, x} \varphi_0(x) \cdot \varphi^+(v).
\end{equation}
Also, denote
$$\undHom\big((L,M),(L',M')\big) := \bigcup_{a\in\ZZ_2} \undHom^a\big((L,M),(L',M')\big),$$
and $\Hom\big((L,M),(L',M')\big) := \undHom^{\bar0}\big((L,M),(L',M')\big)$. In the case that $L = L'$, the subset $\undHom^a_L(M, M') \subseteq \undHom^a\big((L,M),(L,M')\big)$ given by the elements where $\varphi_0 = \id_L$ is clearly a vector space. Thus
$$ \undHom_L(M, M') := \undHom^{\bar0}_L(M, M') \oplus \undHom^{\bar1}_L(M, M') $$
is a $\ZZ_2$-graded vector space. The elements of the vector space $\Hom_L(M, M') := \undHom^{\bar 0}_L(M, M')$ will be called \emph{$L$-supermodule homomorphisms}.

\smallskip

It is well-known that if $M$, $N$ are $L$-supermodules for a Lie superalgebra $L$, then the vector space of linear maps $M \to N$ inherits a $\ZZ_2$-grading,
$\Hom(M, N) = \Hom^{\bar 0}(M, N) \oplus \Hom^{\bar 1}(M, N)$,
and it becomes an $L$-supermodule with the action
\begin{equation}
(x \cdot f)(v) := x \cdot f(v) - \eta_{x,f} f(x \cdot v)
\end{equation}
for $x \in L$, $f \in \Hom(M,N)$, $v \in M$.

\bigskip

There are several definitions of automorphism groups for Lie supermodules. It is clear that the set of bijective elements in $\undEnd(L,M) := \undHom\big((L,M),(L,M)\big)$ defines a group, that will be denoted by $\undAut(L,M)$. Note that $\undAut(L,M) \leq \Aut(L)\times\GL(M)$. The even elements define a subgroup
$\Aut(L,M) := \undAut^{\bar0}(L,M) \unlhd \undAut(L,M)$. On the other hand, the elements with $\varphi_0 = \id_L$ define a subgroup $\undAut_L(M) \unlhd \undAut(L,M)$. Set $\Aut_L(M) := \undAut_L(M) \cap \Aut(L,M)$. The automorphism group schemes $\bUndAut(L,M)$, $\bUndAut_L(M)$, $\bAut(L,M)$ and $\bAut_L(M)$ are defined similarly.

\subsection{Generalized Jordan superpairs}

Recall from \cite[\S3]{AFS17} that a \emph{trilinear pair} is a pair of vector spaces $\cV = (\cV^-, \cV^+)$ with a pair of trilinear maps $\{\cdot,\cdot,\cdot\}^\sigma \colon \cV^\sigma \times \cV^{-\sigma} \times \cV^\sigma \to \cV^\sigma$, $\sigma \in \{+, -\}$. We will write
\begin{equation} 
D^\sigma_{x,y}(z) := \{x,y,z\}^\sigma
\end{equation}
for $x,z\in \cV^\sigma$, $y\in \cV^{-\sigma}$, $\sigma = \pm$. The superscript $\sigma$ is sometimes omitted for short.

Let $G$ be an abelian group and $\cV$ a trilinear pair. Given two decompositions of vector spaces $\Gamma^{\sigma} \colon \cV^\sigma = \bigoplus_{g\in G} \cV_g^{\sigma}$, for $\sigma = \pm$, we will say that $\Gamma=(\Gamma^+,\Gamma^-)$ is a {\em $G$-grading on $\cV$} if $\lbrace \cV^{\sigma}_g, \cV^{-\sigma}_h, \cV^{\sigma}_k \rbrace \subseteq \cV^{\sigma}_{g+h+k}$ for any $g,h,k\in G$ and $\sigma\in \lbrace +,- \rbrace$. The vector space $\cV^+_g \oplus \cV^-_g$ is called the {\em homogeneous component of degree} $g$. If $0\neq x\in \cV^{\sigma}_g$ we say $x$ is {\em homogeneous of degree $g$}.
Notice that if $x,y$ are homogeneous, then $D^\sigma_{x,y}$ is a homogeneous map of degree $\varepsilon(D^\sigma_{x,y}) := \varepsilon(x) + \varepsilon(y)$.

\bigskip

A \emph{generalized Jordan superpair} is a trilinear pair $\cV = (\cV^-, \cV^+)$, where the subspaces $\cV^-$ and $\cV^+$ are $\ZZ_2$-graded and we have that
\begin{equation} \label{defGJSP}
\begin{split}
D^\sigma_{x,y} & D^\sigma_{z,w} - 
(-1)^{(\varepsilon(x) + \varepsilon(y))(\varepsilon(z) + \varepsilon(w))}
D^\sigma_{z,w} D^\sigma_{x,y} \\
&= D^\sigma_{D^\sigma_{x,y}z, w} - 
(-1)^{\varepsilon(x)\varepsilon(y) + \varepsilon(y)\varepsilon(z) + \varepsilon(z)\varepsilon(x)}
D^\sigma_{z, D^{-\sigma}_{y,x}w}
\end{split}
\end{equation}
for any homogeneous elements $x,z\in \cV^\sigma$, $y,w\in \cV^{-\sigma}$, $\sigma = \pm$. Note that the left side of \eqref{defGJSP} is just the Lie superbracket
\begin{equation*}
[D^\sigma_{x,y}, D^\sigma_{z,w}] = D^\sigma_{x,y} D^\sigma_{z,w} - 
\eta_{D^\sigma_{x,y}, D^\sigma_{z,w}} D^\sigma_{z,w} D^\sigma_{x,y}.
\end{equation*}
Thus \eqref{defGJSP} is equivalent to
\begin{equation} \label{eqGJSP}
[D^\sigma_{x,y}, D^\sigma_{z,w}] = 
D^\sigma_{D^\sigma_{x,y}z, w} - \eta_{x,y,z} D^\sigma_{z, D^{-\sigma}_{y,x}w}.
\end{equation}
In particular, if $\cV^\sigma_{\bar1} = 0$ for $\sigma = \pm$, then $\cV$ is called a \emph{generalized Jordan pair}. On the other hand, if $\cV^\sigma_{\bar0} = 0$ for $\sigma = \pm$, then $\cV$ is called a \emph{generalized Jordan antipair}. Sometimes \eqref{eqGJSP} is referred to as the \emph{fundamental identity}.

\bigskip

Let $\cV$ be a generalized Jordan superpair, $D = (D^-, D^+) \in \End(\cV^-) \times \End(\cV^+)$, and $a\in\ZZ_2$. We will say that $D$ is a \emph{superderivation of degree $a$} of $\cV$, and write $\varepsilon(D) := a$, if we have that $D^\sigma \cV^\sigma_b \subseteq \cV^\sigma_{a+b}$ for any $\sigma=\pm$, $b\in\ZZ_2$, and
\begin{equation} \label{derivGJSPdef1}
D^\sigma(\{x,y,z\}) = \{D^\sigma(x),y,z\} + \eta_{D, x} \{x,D^{-\sigma}(y),z\}
+ \eta_{D, D_{x,y}} \{x,y,D^\sigma(z)\}
\end{equation}
for any homogeneous elements $x,z\in\cV^\sigma$, $y\in\cV^{-\sigma}$. A \emph{superderivation} is the sum of an even superderivation and an odd superderivation. 

\smallskip

Given a generalized Jordan superpair $\cV$, consider the operators
\begin{equation}
\nu(x,y) := (D^-_{x,y}, - \eta_{x,y} D^+_{y,x}) \in\End(\cV^-) \times \End(\cV^+),
\end{equation}
for homogeneous elements $x\in\cV^-$, $y\in\cV^+$. For convenience, we will also denote 
\begin{equation} \label{nuAntisymmetry}
\nu(y, x) := - \eta_{x,y} \nu(x, y).
\end{equation}
By \eqref{defGJSP} or \eqref{eqGJSP}, we have
\begin{align*}
D_{x,y} & \{u,v,w\} = \{D_{x,y}u,v,w\} - \eta_{x,y,u} \{u,D_{y,x}v,w\}
+ \eta_{D_{x,y}, D_{u,v}} \{u,v,D_{x,y}w\} \\
&= \{D_{x,y}u,v,w\} - \eta_{x,y}\eta_{D_{x,y},u} \{u,D_{y,x}v,w\}
+ \eta_{D_{x,y}, D_{u,v}} \{u,v,D_{x,y}w\},
\end{align*}
so that the operators $\nu(x,y)$ are superderivations. Also note that
\begin{equation} \label{nuBracket1} \begin{split}
[\nu(x,y)&, \nu(z,w)] = \nu(D_{x,y}z,w) - \eta_{x,y,z} \nu(z,D_{y,x}w) \\
&= \nu(\nu(x,y) \cdot z,w) + \eta_{z, D_{x,y}} \nu(z, \nu(x,y) \cdot w) .
\end{split} \end{equation}
Denote $\cV_{a} := \cV^-_a \oplus \cV^+_a$ for $a\in\ZZ_2$.
The \emph{inner structure (Lie) superalgebra} of a generalized Jordan superpair is the Lie superalgebra
\begin{equation}
\instr(\cV) := \lspan\{ \nu(x,y) \med x\in\cV^-, y\in\cV^+ \} \leq \gl(\cV_{\bar0} | \cV_{\bar1}),
\end{equation}
and its elements are called \emph{inner superderivations} of $\cV$. (Inner structure algebras appear in \cite{F73}, and are also used in the Kantor construction.) If $\cV^\sigma_{\bar1} = 0$ for $\sigma = \pm$, then $\instr(\cV)$ is a Lie algebra called the \emph{inner structure algebra} of $\cV$, and its elements are called \emph{inner derivations}. From \eqref{nuBracket1}, it is clear that
\begin{equation} \label{nuBracket2}
[x, \nu(f,v)] = \nu(x \cdot f,v) + \eta_{x,f} \nu(f, x \cdot v) .
\end{equation}
for any homogeneous elements $x\in \instr(\cV)$, $f\in \cV^-$, $v\in\cV^+$.

\bigskip

A generalized Jordan superpair $\cV$ is called a \textit{Jordan superpair} if
\begin{equation}
\{x,y,z\}^\sigma = \eta_{x,y} \eta_{x,z} \eta_{y,z} \{z,y,x\}^\sigma
\end{equation}
for all $x,z\in\cV^\sigma$, $y\in\cV^{-\sigma}$, $\sigma = \pm$. If a Jordan superpair is even ($\cV_{\bar1}^\sigma = 0$ for $\sigma = \pm$), it is called a \textit{Jordan pair}. If a Jordan superpair is odd ($\cV_{\bar0}^\sigma = 0$ for $\sigma = \pm$), it is called a \textit{Jordan antipair} or \textit{anti-Jordan pair} (this is equivalent to the definition in \cite{BEM11}).

\smallskip

Given a generalized Jordan pair $\cV$, we say that $\cV$ is a \emph{Kantor pair} (or \emph{generalized Jordan pair of second order}) if
\begin{equation}
K^\sigma_{K^\sigma_{x,y}z,w} = K^\sigma_{x,y}D^{-\sigma}_{z,w} + D^\sigma_{w,z}K^\sigma_{x,y}
\end{equation}
for all $x,z \in\cV^\sigma$, $y,w\in\cV^{-\sigma}$, $\sigma = \pm$.
It is well-known that Jordan pairs are exactly the Kantor pairs satisfying $K_{x,y} = 0$ for all $x,y\in\cV^\sigma$, $\sigma = \pm$.

\bigskip

Let $\cV = (\cV^+, \cV^-)$ be a generalized Jordan superpair with a bilinear form $\langle\cdot,\cdot\rangle \colon \cV^- \times \cV^+ \to \FF$. We will say that $\langle\cdot,\cdot\rangle$ is \emph{left-superinvariant}, or \emph{superinvariant}, if
\begin{equation} \label{superinvariant}
\langle D_{x,y}z, w \rangle = \eta_{x,y,z} \langle z, D_{y,x}w \rangle
\end{equation}
for any homogeneous elements $x,z\in\cV^-$, $y,w\in\cV^+$. Similarly, $\langle\cdot,\cdot\rangle$ is said to be \emph{right-superinvariant} if
\begin{equation}
\langle D_{x,y}z, w \rangle = \eta_{x,y,w} \langle z, D_{y,x}w\rangle .
\end{equation}
In particular, if $\cV_{\bar1} = 0$, then the bilinear form $\langle\cdot,\cdot\rangle$ is left-invariant if and only if it is right-invariant, and in this case it will be said to be \emph{invariant}.
If $\langle x, y \rangle = 0$ for any $x\in\cV^-$, $y\in\cV^+$ such that $\varepsilon(x) \neq \varepsilon(y)$, then the bilinear form is said to be \emph{homogeneous}. On the other hand, we will say that $\langle\cdot,\cdot\rangle$ is \emph{left-supersymmetric}, or \emph{supersymmetric}, if it satisfies the identities
\begin{equation}\label{symmetricForm}
\begin{split}
\langle D_{x,y}z, w \rangle &= \eta_{D_{x,y}, D_{z,w}} \langle D_{z,w}x, y \rangle, \\
\langle x, D_{y,z}w \rangle &= \eta_{D_{x,y}, D_{z,w}} \langle z, D_{w,x}y \rangle
\end{split}
\end{equation}
for any homogeneous elements $x,z\in\cV^-$, $y,w\in\cV^+$.
Note that the two identities in \eqref{symmetricForm} are equivalent if $\langle\cdot,\cdot\rangle$ is either left-superinvariant or right-superinvariant. Similarly, we  will say that $\langle\cdot,\cdot\rangle$ is \emph{right-supersymmetric} if it satisfies the identities
\begin{equation}
\begin{split}
\langle D_{x,y}z, w \rangle &= \eta_{D_{x,w}, D_{z,y}} \langle D_{z,w}x, y \rangle, \\
\langle x, D_{y,z}w \rangle &= \eta_{D_{x,w}, D_{z,y}} \langle z, D_{w,x}y \rangle
\end{split}
\end{equation}
for any homogeneous elements $x,z\in\cV^-$, $y,w\in\cV^+$. Note that if $\cV_{\bar1} = 0$, then the left and right supersymmetric identities coincide, and we will say in this case that the bilinear form is \emph{symmetric}.

\bigskip

Recall that a homomorphism $\varphi \colon \cV \to \cW$ of generalized Jordan pairs is a pair of linear maps $\varphi = (\varphi^-, \varphi^+)$, with $\varphi^\sigma \colon \cV^\sigma \to \cW^\sigma$, such that $\varphi^\sigma(\{x,y,z\}^\sigma) = \{ \varphi^\sigma(x), \varphi^{-\sigma}(y), \varphi^\sigma(z) \}$ for any $x,z\in\cV^\sigma$, $y\in\cV^{-\sigma}$, $\sigma = \pm$. For generalized Jordan superpairs, we also require that the homomorphisms preserve the parity. As usual, the automorphism group of $\cV$ will be denoted as $\Aut(\cV)$, and the automorphism group scheme as $\bAut(\cV)$.

\section{The Faulkner construction for GJSP} \label{sectionFaulkner}

In this section we will revisit the Faulkner correspondence, giving a detailed proof that is adapted to generalized Jordan superpairs.

\begin{notation}
We will denote by $\FLSM$ the class of objects of the form $(L, M, b)$, where $L$ is a finite-dimensional Lie superalgebra, $M$ is a finite-dimensional faithful $L$-supermodule, and $b \colon L \times L \to \FF$ is a nondegenerate homogeneous invariant supersymmetric bilinear form. We will denote by $\Aut(L, b)$ the subgroup of $\Aut(L)$ preserving the bilinear form, which consists of the elements $\varphi_0 \in \Aut(L)$ such that
\begin{equation}
b(\varphi_0(x), \varphi_0(y)) = b(x,y)
\end{equation}
for any $x,y\in L$. Also, we will denote by $\Aut(L, M, b)$ the group of automorphisms of the supermodule preserving the bilinear form, which consists of the pairs $\varphi = (\varphi_0, \varphi^+) \in \Aut(L, M)$ such that $\varphi_0 \in \Aut(L, b)$. The subgroup schemes $\bAut(L, b) \leq \bAut(L)$ and $\bAut(L, M, b) \leq \bAut(L, M)$ are defined similarly. The subclass of objects of $\FLSM$ where $L_{\bar1} = 0$ and $M_{\bar1} = 0$ will be denoted by $\FLM$.

Also, denote by $\GJSP$ the class of objects of the form $(\cV, \langle\cdot,\cdot\rangle)$, where $\cV$ is a finite-dimensional generalized Jordan superpair with a nondegenerate homogeneous superinvariant supersymmetric bilinear form $\langle\cdot,\cdot\rangle \colon \cV^- \times \cV^+ \to \FF$. We will denote by $\Aut(\cV, \langle\cdot,\cdot\rangle)$ the group of automorphisms preserving the bilinear form, which consists of the the pairs $\varphi = (\varphi^-, \varphi^+) \in \Aut(\cV)$ such that
\begin{equation}
\langle \varphi^-(f), \varphi^+(v) \rangle = \langle f, v \rangle
\end{equation}
for any $f\in \cV^-$, $v\in \cV^+$. Again, the subgroup scheme $\bAut(\cV, \langle\cdot,\cdot\rangle) \leq \bAut(\cV)$ is defined similarly. The subclass of objects of $\GJSP$ where $\cV_{\bar1} = 0$ will be denoted by $\GJP$.
\end{notation}


Next result generalizes part of \cite[Lemma~1.1]{F73} to the super case:

\begin{proposition} \label{GJP}
Let $L$ be a finite-dimensional Lie superalgebra, $M$ a finite-dimensional $L$-supermodule, and $b$ a nondegenerate homogeneous invariant supersymmetric bilinear form on $L$. Let $M^* := M^\gets$ denote the (left) dual $L$-supermodule of $M$. Consider the pairing bilinear form $\langle \cdot, \cdot \rangle \colon M^* \times M \to \FF$ given by $\langle f, v\rangle := f(v)$ for $v\in M$, $f\in M^*$, and also denote
\begin{equation} \label{flipDualBilinearForm}
\langle v, f\rangle := \eta_{f,v} \langle f, v\rangle = \eta_{f,v} f(v).
\end{equation}
For each $v\in M$, $f\in M^*$, denote by $[f, v], [v, f] \in L$ the only elements satisfying
\begin{equation} \label{bilinearFormsEc}
b(x, [f,v]) = \langle x \cdot f, v \rangle \quad \text{and}
\quad b(x, [v, f]) = \langle x \cdot v, f \rangle
\end{equation}
for all $x\in L$. Then $\cV_{L,M} := (M^*, M)$ becomes a generalized Jordan superpair with triple products given by
\begin{equation} \label{tripleProductsFromAction}
\{f, v, g\}^- := [f,v] \cdot g, \qquad \{v, f, w\}^+ := [v,f] \cdot w,
\end{equation}
for $f,g\in M^*$, $v,w\in M$. Besides, $\langle\cdot,\cdot\rangle$ is a nondegenerate homogeneous superinvariant supersymmetric bilinear form on $\cV_{L,M}$.
\end{proposition}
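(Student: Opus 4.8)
The plan is to exploit three facts: that $b$ is nondegenerate (so the elements $[f,v],[v,f]\in L$ in \eqref{bilinearFormsEc} are well-defined), that the module actions on $M$ and on $M^*=M^\gets$ are Lie superalgebra homomorphisms $L\to\gl(\cdots)$ by \eqref{supermodule}, and that invariance \eqref{invariant} of $b$ lets me transfer brackets in $L$ to the pairing. First I would record the elementary bookkeeping. Homogeneity of $b$ and of the canonical pairing forces $[f,v]$ and $[v,f]$ to be homogeneous of degree $\varepsilon(f)+\varepsilon(v)$ whenever $f,v$ are homogeneous, so that $D^-_{f,v}$ and $D^+_{v,f}$ have the correct parity and the triple products \eqref{tripleProductsFromAction} respect the $\ZZ_2$-grading. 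Comparing the two defining equations in \eqref{bilinearFormsEc} through the dual action \eqref{dualityAction} and invoking nondegeneracy of $b$ yields the antisymmetry relation $[v,f]=-\eta_{f,v}[f,v]$, the analogue of \eqref{nuAntisymmetry}.

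The core of the argument is the bracket formula
\[
[x,[f,v]] = [x\cdot f, v] + \eta_{x,f}\,[f, x\cdot v] \qquad (x\in L),
\]
which I would derive by pairing the left-hand side against an arbitrary $y\in L$: invariance \eqref{invariant} gives $b(y,[x,[f,v]]) = b([y,x],[f,v]) = \langle [y,x]\cdot f, v\rangle$, then expanding $[y,x]\cdot f$ by \eqref{supermodule} and re-expressing each summand through \eqref{bilinearFormsEc} and \eqref{dualityAction} (so that $y$ becomes the pairing variable) collects everything into $b(y,\cdot)$; nondegeneracy of $b$ then removes $y$. The whole step is just careful $\eta$-bookkeeping, the one nontrivial simplification being $\eta_{y,x}\eta_{x,y\cdot f}=\eta_{x,f}$. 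The companion formula for $[x,[v,f]]$ follows either by the same computation or from the antisymmetry relation.

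With this in hand the fundamental identity \eqref{eqGJSP} is almost immediate. For $\sigma=-$ I would use that $D^-_{f,v}$ is the action of $[f,v]$ on $M^*$ and that this action is a homomorphism, so $[D^-_{f,v},D^-_{h,u}]$ equals the action of $[[f,v],[h,u]]$; applying the bracket formula with $x=[f,v]$ expresses $[[f,v],[h,u]]$ as $[[f,v]\cdot h,u]+\eta_{[f,v],h}[h,[f,v]\cdot u]$, and the antisymmetry relation together with the identity $\eta_{f,v,h}\,\eta_{f,v}=\eta_{[f,v],h}$ rewrites the second summand as $-\eta_{f,v,h}[h,[v,f]\cdot u]$, which is exactly the right-hand side of \eqref{eqGJSP}. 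The case $\sigma=+$ is identical after swapping the roles of $M$ and $M^*$ and using the companion formula. Note that no faithfulness of $M$ is needed here, since everything is deduced at the level of $L$ and then pushed through the representation.

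Finally I would check the properties of the form. Homogeneity and nondegeneracy of $\langle\cdot,\cdot\rangle$ on $\cV_{L,M}$ are inherited verbatim from the canonical pairing $M^*\times M\to\FF$. For superinvariance \eqref{superinvariant} I would compute $\langle D^-_{f,v}g,u\rangle=\langle [f,v]\cdot g,u\rangle$, apply the dual action \eqref{dualityAction} and the antisymmetry relation to land on $\langle g,[v,f]\cdot u\rangle=\langle g, D^+_{v,f}u\rangle$, the overall sign working out to $\eta_{f,v,g}$ via $\eta_{[f,v],g}\eta_{f,v}=\eta_{f,v,g}$. For supersymmetry I would observe that \eqref{bilinearFormsEc} gives $\langle [f,v]\cdot g,u\rangle=b([f,v],[g,u])$, so the first identity of \eqref{symmetricForm} reduces directly to the supersymmetry \eqref{supersymmetric_b} of $b$ (the parity factors matching because $[f,v]$ has degree $\varepsilon(f)+\varepsilon(v)$); the second identity then follows from the remark that the two identities of \eqref{symmetricForm} are equivalent once the form is superinvariant. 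The main obstacle throughout is purely the Koszul-sign bookkeeping: every individual step is a short computation, but the factors $\eta_{\cdots}$ must be tracked exactly so that the formulas collapse onto \eqref{eqGJSP}, \eqref{superinvariant} and \eqref{symmetricForm}.
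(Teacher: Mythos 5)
Your proposal is correct and follows essentially the same route as the paper: establish homogeneity of the brackets and the antisymmetry $[f,v]=-\eta_{f,v}[v,f]$, derive the relation $\big[[f,v],[g,w]\big]=[D_{f,v}g,w]-\eta_{f,v,g}[g,D_{v,f}w]$ in $L$ by pairing against an arbitrary element and using nondegeneracy of $b$, read off the fundamental identity from the fact that the representations on $M^*$ and $M$ are Lie superalgebra homomorphisms, and verify superinvariance and supersymmetry exactly as the paper does. The only organizational difference is that you first prove the general formula $[x,[f,v]]=[x\cdot f,v]+\eta_{x,f}[f,x\cdot v]$ and then specialize $x=[f,v]$, whereas the paper computes $b(x,\big[[f,v],[g,w]\big])$ directly and establishes that general formula only later, in the proof of Proposition~\ref{instrTheorem}, to show that $\instr(L,M)$ is an ideal; your arrangement is marginally more economical since the one lemma serves both purposes.
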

\begin{proof}
First, we claim that $[f,v]$ and $[v,f]$ are well-defined. For each $f\in M^*$, $v\in M$, consider the map $\psi_{f,v}\colon L \to \FF$, $x \mapsto \langle x \cdot f, v \rangle$. Since $\psi_{f,v} \in L^*$ and $b$ is nondegenerate, there exists a unique element $y\in L$ such that $\psi_{f,v}(x) = b(x, y)$ for all $x\in L$, which proves the claim for the former case, and the latter case is analogous. It is also clear that the expressions $[f,v]$ and $[v,f]$ are bilinear on their parameters. Since $b$ and $\langle\cdot,\cdot\rangle$ are homogeneous and $b$ is nondegenerate, it follows that the brackets defined in \eqref{bilinearFormsEc} are homogeneous too, that is,
\begin{equation} \label{bracketsHomogeneous}
\varepsilon([f,v]) = \varepsilon(f) + \varepsilon(v) = \varepsilon([v,f]).
\end{equation}

We need to prove that the identities
\begin{align}
[D^-_{f,v}, D^-_{g,w}] &= D^-_{D^-_{f,v}g, w} - \eta_{f,v,g} D^-_{g, D^+_{v,f}w}, \label{particularKantorIdentities} \\
[D^+_{v,f}, D^+_{w,g}] &= D^+_{D^+_{v,f}w, g} - \eta_{v,f,w} D^+_{w, D^-_{f,v}g}, \label{particularKantorIdentities2}
\end{align}
hold for any $v,w\in M$, $f,g\in M^*$.

From \eqref{bilinearFormsEc} and \eqref{dualityAction}, it follows that
$$ b(x, [f, v]) = \langle x \cdot f, v \rangle = - \eta_{x,f} \langle f, x \cdot v \rangle
= - \eta_{x,f} \eta_{x\cdot v,f} \langle x \cdot v, f \rangle = b(x, - \eta_{f,v} [v,f])$$
for all homogeneous $x\in L$, $f\in M^*$, $v \in M$, and since $b$ is nondegenerate, we get that
\begin{equation} \label{antisymmetry_fv}
[f, v] = - \eta_{f,v} [v, f]
\end{equation}
for all homogeneous $f\in M^*$, $v \in M$.

For each homogeneous elements $x\in L$, $f,g\in M^*$, $v,w\in M$, we have that
\begin{align*}
b(x&, \big[[f,v],[g,w]\big]) =_{\eqref{invariant}} \\
&= b(\big[x,[f,v]\big], [g,w]) =_{\eqref{bilinearFormsEc}} \\
&= \langle \big[x,[f,v]\big] \cdot g, w \rangle =_{\eqref{supermodule}} \\
&= \langle x \cdot ([f,v] \cdot g), w \rangle
- \eta_{x, [f,v]} \langle [f,v] \cdot (x \cdot g), w \rangle
=_{\eqref{bilinearFormsEc}, \eqref{dualityAction}} \\
&= b(x, \big[[f,v] \cdot g, w\big])
+ \eta_{x,f}\eta_{x,v} \eta_{[f,v], x\cdot g} \langle x \cdot g, [f,v] \cdot w \rangle
=_{\eqref{bilinearFormsEc}, \eqref{antisymmetry_fv}} \\
&= b(x, \big[[f,v] \cdot g, w\big] - (\eta_{f,g}\eta_{v,g})\eta_{f,v} \big[g, [v, f] \cdot w\big]) \\
&= b(x, [\{f,v,g\}, w] - \eta_{f,v,g} [g, \{v,f,w\}]),
\end{align*}
and since $b$ is nondegenerate it follows that
\begin{equation} \label{bracketOperatorsActing}
\big[[f,v],[g,w]\big] = [D_{f,v}g, w] - \eta_{f,v,g} [g, D_{v,f}w].
\end{equation}
Similarly, and using \eqref{flipDualBilinearForm}, we get that
\begin{equation} \label{bracketOperatorsActing2}
\big[[v,f],[w,g]\big] = [D_{v,f}w, g] - \eta_{v,f,w} [w, D_{f,v}g],
\end{equation}
although this also follows from
\begin{align*}
\big[[ & v,f],[w,g]\big] =_\text{\eqref{antisymmetry_fv}} \\
&= \eta_{f,v}\eta_{g,w} \big[[f,v],[g,w]\big] =_\text{\eqref{bracketOperatorsActing}} \\
&= \eta_{f,v}\eta_{g,w} \big( [D_{f,v}g, w] - \eta_{f,v,g} [g, D_{v,f}w] \big) =_\text{\eqref{antisymmetry_fv}} \\
&= (\eta_{f,v}\eta_{g,w}) (- \eta_{f,w}\eta_{v,w}\eta_{g,w}) [w, D_{f,v}g] \\
& \quad - (\eta_{f,v}\eta_{g,w}) \eta_{f,v,g} (- \eta_{g,v}\eta_{g,f}\eta_{g,w}) [D_{v,f}w, g] \\
&= [D_{v,f}w, g] - \eta_{v,f,w} [w, D_{f,v}g].
\end{align*}
Note that the actions on $M^*$ given by the elements of each side of Eq.~\eqref{bracketOperatorsActing} coincide with the actions given by the operators on each side of Eq.~\eqref{particularKantorIdentities}, from where it follows that Eq.~\eqref{particularKantorIdentities} does hold. Similarly, Eq.~\eqref{particularKantorIdentities2} does hold too. Thus we have proven that $\cV_{L,M}$ is indeed a generalized Jordan superpair.

On the other hand, from \eqref{dualityAction} and \eqref{antisymmetry_fv}, it follows that for homogeneous $f,g\in M^*$, $v,w\in M$ we have
\begin{align*}
\langle D_{f,v}g, w \rangle &= \langle [f,v] \cdot g, w \rangle
= - \eta_{[f,v],g} \langle g, [f,v] \cdot w \rangle \\
&= \eta_{f,g}\eta_{v,g}\eta_{f,v} \langle g, [v,f] \cdot w \rangle
= \eta_{f,v,g} \langle g, D_{v,f}w \rangle, 
\end{align*}
that is, the bilinear form $\langle\cdot,\cdot\rangle$ is superinvariant.
Since $b$ is supersymmetric, we get
\begin{align*}
\langle D_{f,v}g, w \rangle &= \langle [f,v] \cdot g, w \rangle
= b([f,v],[g,w]) = \eta_{[f,v], [g,w]} b([g,w],[f,v]) \\
&= \eta_{D_{f,v}, D_{g,w}} \langle [g,w] \cdot f, v \rangle
= \eta_{D_{f,v}, D_{g,w}} \langle D_{g,w}f, v \rangle,
\end{align*}
and therefore $\langle\cdot,\cdot\rangle$ is also supersymmetric. 
\end{proof}


The following result is a generalization of \cite[Lemma~23]{MFMR09} to the super case:

\begin{proposition} \label{instrTheorem}
Under the assumptions of Prop.~\ref{GJP},
$$ \instr(L, M) := \lspan\{ [f,v] \med f\in M^*, v\in M \} $$
is an ideal of $L$, and the restriction of the representation $\Phi \colon L \to \gl(M^* \oplus M)$ defines an epimorphism of superalgebras given by
\begin{equation}\begin{split}
\Upsilon \colon \instr(L, M) &\longrightarrow \instr(\cV_{L,M}) \leq \gl(M^* \oplus M), \\
\quad [f,v] &\longmapsto \nu(f,v) := (D_{f,v}, - \eta_{f,v} D_{v,f}).
\end{split}\end{equation}
Furthermore, $\ker\Phi = \instr(L, M)^\perp$.
In particular, if the $L$-supermodule $M$ is faithful, then $L = \instr(L, M) \cong \instr(\cV_{L,M})$.
\end{proposition}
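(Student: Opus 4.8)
The plan is to transport the $L$-module structure on $M^*\oplus M$ through the nondegenerate form $b$, so that each assertion reduces to a short computation based on the defining relation \eqref{bilinearFormsEc} together with nondegeneracy of $b$ and of the pairing $\langle\cdot,\cdot\rangle$.

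First I would show $\instr(L,M)$ is an ideal by proving the Lie-module analogue of \eqref{nuBracket2}, namely
\[
[x,[f,v]] = [x\cdot f, v] + \eta_{x,f}\,[f, x\cdot v]
\]
for homogeneous $x\in L$, $f\in M^*$, $v\in M$. To obtain this, pair both sides against an arbitrary $y\in L$: by invariance \eqref{invariant} and then \eqref{bilinearFormsEc}, $b(y,[x,[f,v]]) = b([y,x],[f,v]) = \langle [y,x]\cdot f, v\rangle$. Expanding $[y,x]\cdot f$ by the supermodule axiom \eqref{supermodule} gives two terms; the first is $b(y,[x\cdot f, v])$ directly from \eqref{bilinearFormsEc}, and the second is rewritten using the dual action \eqref{dualityAction} and \eqref{bilinearFormsEc} once more as $-\eta_{x,y}\eta_{x,f}\,b(y,[f, x\cdot v])$, the factor $\eta_{y,x}\eta_{x,y}=1$ then producing the stated identity. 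Since the right-hand side lies in $\instr(L,M)$, nondegeneracy of $b$ yields $[L,\instr(L,M)]\subseteq\instr(L,M)$, so $\instr(L,M)$ is an ideal (in particular a subalgebra).

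For the epimorphism, the key observation is that $\Phi([f,v])$ and $\nu(f,v)$ are \emph{literally the same operator} on $M^*\oplus M$. On the summand $M^*=\cV^-$ both act by $g\mapsto [f,v]\cdot g = D_{f,v}g$. On $M=\cV^+$ the operator $\Phi([f,v])$ acts by $w\mapsto [f,v]\cdot w$, whereas $\nu(f,v)$ acts by $-\eta_{f,v}D_{v,f}$, i.e. by $w\mapsto -\eta_{f,v}[v,f]\cdot w$; these agree because \eqref{antisymmetry_fv} gives $[v,f]=-\eta_{f,v}[f,v]$ and $\eta_{f,v}^2=1$. As $\Phi$ is a homomorphism of Lie superalgebras (it is the representation afforded by the supermodule $M^*\oplus M$), its restriction to the subalgebra $\instr(L,M)$ is a superalgebra homomorphism $\Upsilon$; and since the $\nu(f,v)=\Phi([f,v])$ span $\instr(\cV_{L,M})$, the map $\Upsilon$ is onto $\instr(\cV_{L,M})$.

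For the kernel, note that $\ker\Phi=\{x\in L: x\cdot v=0\ \forall v\in M\}$, and this coincides with $\{x: x\cdot f=0\ \forall f\in M^*\}$ by the duality \eqref{dualityAction} and nondegeneracy of the pairing. By \eqref{bilinearFormsEc}, $x$ annihilates $M^*$ if and only if $b(x,[f,v])=\langle x\cdot f,v\rangle=0$ for all $f,v$, i.e. if and only if $x\in\instr(L,M)^\perp$; hence $\ker\Phi=\instr(L,M)^\perp$. If $M$ is faithful then $\ker\Phi=0$, so $\instr(L,M)^\perp=0$; nondegeneracy of $b$ on the finite-dimensional space $L$ forces $\dim\instr(L,M)^\perp=\dim L-\dim\instr(L,M)$, whence $\instr(L,M)=L$, and then $\Upsilon=\Phi$ is a bijective superalgebra homomorphism, giving $L\cong\instr(\cV_{L,M})$. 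The heart of the argument, and the only genuinely computational step, is the ideal identity above; once that and the operator identification $\Phi([f,v])=\nu(f,v)$ are in place, the surjectivity, the description of the kernel, and the faithful case all follow formally from nondegeneracy.
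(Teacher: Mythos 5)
Your proposal is correct and follows essentially the same route as the paper: the ideal property is obtained from the identity $[x,[f,v]]=[x\cdot f,v]+\eta_{x,f}[f,x\cdot v]$ proved by pairing against $y\in L$ via \eqref{invariant}, \eqref{bilinearFormsEc}, \eqref{supermodule} and \eqref{dualityAction}; the epimorphism comes from identifying $\nu(f,v)$ with $\Phi([f,v])$ using \eqref{antisymmetry_fv}; and the kernel and faithful case follow from nondegeneracy of $b$ and of the pairing exactly as in the paper. The only cosmetic difference is that you deduce the subalgebra property from the ideal identity, whereas the paper first cites \eqref{bracketOperatorsActing} for closure under brackets.
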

\begin{proof}
It follows from \eqref{bracketOperatorsActing} and \eqref{bracketsHomogeneous} that $\instr(L, M)$ is a Lie subsuperalgebra of $L$. Moreover, for each homogeneous $x,y\in L$, $f\in M^*$, $v\in M$, we have that
\begin{align*}
b(y, \big[x,[f,v]\big]) &= b([y,x], [f,v]) = \langle [y,x] \cdot f, v \rangle \\
&= \langle y \cdot (x \cdot f), v \rangle - \eta_{x,y} \langle x \cdot (y \cdot f), v \rangle \\
&= b(y, [x \cdot f, v]) + \eta_{x,y}\eta_{x,y\cdot f} \langle y \cdot f, x \cdot v \rangle \\
&= b(y, [x \cdot f, v] + \eta_{x,f} [f, x \cdot v]),
\end{align*}
with $b$ nondegenerate, which implies that
\begin{equation}
\big[x, [f,v]\big] = [x\cdot f, v] + \eta_{x,f} [f, x\cdot v],
\end{equation}
and therefore $\instr(L, M)$ is an ideal of $L$. It is clear that the action of $[f,v]$ on $M^* \oplus M$ is given by $\nu(f,v)$, and since $\instr(\cV_{L,M})$ is spanned by the operators $\nu(f,v)$, it follows that $\Upsilon$ is an epimorphism. (Also, note that the notation in \eqref{nuAntisymmetry} is consistent with \eqref{antisymmetry_fv}.)

Set $K = \instr(L,M)^\perp := \{ k \in L \med b(k, \instr(L,M)) = 0 \}$. Note that since $b$ is homogeneous and $\instr(L,M)$ is a subsuperalgebra of $L$, it follows that we can decompose $K = K_{\bar0} \oplus K_{\bar1}$ with $K_a \subseteq L_a$.
For each homogeneous $k \in K$, $f\in M^*$, $v\in M$,  we have
$$ \langle f, k\cdot v \rangle = - \eta_{k,f} \langle k\cdot f, v \rangle
= - \eta_{k,f} b(k, [f,v]) = 0 $$
with $\langle\cdot,\cdot\rangle$ nondegenerate, hence $k\cdot v = 0$ and $k\cdot f = 0$, which shows that $K \subseteq \ker\Phi$. On the other hand, for each $z\in\ker\Phi$, $f\in M^*$, $v\in M$, we have that $b(z, [f,v]) = \langle z\cdot f, v\rangle = 0$, so $\ker\Phi \subseteq \instr(L,M)^\perp = K$. We have proven that $K = \ker\Phi$.

Finally, assume that the representation $\Phi$ is faithful. Then $\Upsilon$ is an isomorphism. Since $K = \instr(L,M)^\perp = \ker\Phi = 0$, the restriction of $b$ to $\instr(L,M)$ must be nondegenerate and $L = \instr(L,M)$.
\end{proof}

\begin{df}
Under the assumptions in Propositions~\ref{GJP} and \ref{instrTheorem}, we will say that $\cV_{L,M}$ is the \emph{generalized Jordan superpair associated to the $L$-supermodule $M$}, and the ideal $\instr(L, M) \unlhd L$ will be called the \emph{inner structure superalgebra of the $L$-supermodule $M$}.
\end{df}


The following result corresponds to the super case of part of \cite[Lemma~1.1]{F73}. Note that the fact that $b$ is well-defined is not trivial, and according to Faulkner's proof (where the details are omitted), this follows from the (super)symmetry of $\langle\cdot,\cdot\rangle$. The proof of this detail is omitted in the proof of \cite[Proposition~25]{MFMR09}, where $b$ was assumed to be well-defined without mention to the (super)symmetry of $\langle\cdot,\cdot\rangle$.

\begin{proposition} \label{restrictionTheorem}
Let $(\cV, \langle\cdot,\cdot\rangle)$ be an object in $\GJSP$ and set $L = \instr(\cV)$.
Then $M := \cV^+$ and $M^* := \cV^-$ are faithful dual $L$-supermodules and
\begin{equation}\begin{split}
\Theta \colon M^* \otimes M & \longrightarrow L \\
f \otimes v & \longmapsto \nu(f,v) := (D_{f,v}, - \eta_{f,v} D_{v,f})
\end{split}\end{equation}
is an epimorphism of $L$-supermodules. Consequently, $(M^* \otimes M)/\ker\Theta \cong L$.

Furthermore, $\langle\cdot,\cdot\rangle$ induces a nondegenerate homogeneous invariant supersymmetric bilinear form $b \colon L \times L \to \FF$ given by
\begin{equation} \label{bilinearRestriction}
b(\nu(f,v), \nu(g,w)) := \langle \nu(f,v) \cdot g, w \rangle = \langle D_{f,v} g, w \rangle.
\end{equation}
\end{proposition}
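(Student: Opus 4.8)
The plan is to establish the four assertions in the order stated, each building on the previous, with the bulk of the genuine work concentrated in the well-definedness of $b$.

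\textbf{Module structure, duality and faithfulness.} First I would note that $L = \instr(\cV)$ is by construction a Lie subsuperalgebra of $\gl(\cV_{\bar0}|\cV_{\bar1})$, so every $x = (x^-, x^+) \in L$ acts on $\cV^-$ and on $\cV^+$ through its two components; the supermodule axiom \eqref{supermodule} is then automatic. To identify $M^* = \cV^-$ as the left-dual of $M = \cV^+$ for the pairing $\langle\cdot,\cdot\rangle$, it suffices to check the defining identity \eqref{dualityAction} on the spanning operators $x = \nu(g,w) = (D_{g,w}, -\eta_{g,w}D_{w,g})$: superinvariance \eqref{superinvariant} gives $\langle D_{g,w}f, v\rangle = \eta_{g,w,f}\langle f, D_{w,g}v\rangle$, and this coincides with $-\eta_{x,f}\langle f, x\cdot v\rangle = \eta_{x,f}\eta_{g,w}\langle f, D_{w,g}v\rangle$ once one verifies the sign identity $\eta_{g,w,f} = \eta_{\nu(g,w),f}\,\eta_{g,w}$, which is a direct expansion of the exponents. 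Faithfulness of both modules then drops out of this duality: if $x^+ = 0$, the relation forces $\langle x^- f, v\rangle = 0$ for all $f,v$, whence $x^- = 0$ by nondegeneracy of $\langle\cdot,\cdot\rangle$ and so $x = 0$; the symmetric argument handles $M^*$.

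\textbf{The epimorphism $\Theta$.} The map $\Theta$ is bilinear and parity-preserving, so it factors through an even linear map $M^* \otimes M \to L$, and it is surjective because $L = \instr(\cV)$ is spanned by the $\nu(f,v)$. The one substantive point is that $\Theta$ intertwines the tensor-product action \eqref{leftTensorMods} with the adjoint action on $L$: applying $\Theta$ to $x\cdot(f\otimes v) = (x\cdot f)\otimes v + \eta_{x,f}\, f\otimes(x\cdot v)$ yields $\nu(x\cdot f, v) + \eta_{x,f}\nu(f, x\cdot v)$, which is exactly $[x, \nu(f,v)] = x\cdot\Theta(f\otimes v)$ by \eqref{nuBracket2}. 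Thus $\Theta$ is an $L$-supermodule epimorphism, and the isomorphism $(M^*\otimes M)/\ker\Theta \cong L$ is the first isomorphism theorem for supermodules.

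\textbf{The form $b$.} The step I expect to be the main obstacle is well-definedness of $b$, since the formula \eqref{bilinearRestriction} is written through the generators $f,v,g,w$ rather than through the elements $\nu(f,v),\nu(g,w)$ of $L$. Independence of the first slot is clear, as $\langle \nu(f,v)\cdot g, w\rangle$ is expressed via the action of the operator $\nu(f,v)$; the problem is the second slot, i.e.\ showing that $\sum_j \nu(g_j,w_j) = 0$ in $L$ implies $\sum_j \langle x\cdot g_j, w_j\rangle = 0$. This is precisely where supersymmetry \eqref{symmetricForm} is indispensable: for homogeneous $x = \nu(f,v)$ it rewrites each term as $\eta_{\nu(f,v),\nu(g_j,w_j)}\langle \nu(g_j,w_j)\cdot f, v\rangle$, and restricting to a fixed total parity (legitimate since $b$ will turn out homogeneous) makes the sign constant, so the sum collapses to $\langle(\sum_j\nu(g_j,w_j))\cdot f, v\rangle = 0$.

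\textbf{Remaining properties of $b$.} Once $b$ is well-defined, the rest is routine. Homogeneity follows from homogeneity of $\langle\cdot,\cdot\rangle$, since $\langle D_{f,v}g, w\rangle$ can be nonzero only when $\varepsilon(f)+\varepsilon(v) = \varepsilon(g)+\varepsilon(w)$. Supersymmetry of $b$ is a direct restatement of the first identity in \eqref{symmetricForm}. Nondegeneracy follows because $b(x, \nu(g,w)) = \langle x\cdot g, w\rangle$ vanishing for all $g,w$ forces $x^- = 0$, hence $x = 0$ by faithfulness of $M^*$. Finally, for invariance $b([x,y],z) = b(x,[y,z])$ I would test on $z = \nu(g,w)$: expanding $[y,z] = \nu(y\cdot g, w) + \eta_{y,g}\nu(g, y\cdot w)$ by \eqref{nuBracket2} gives $b(x,[y,z]) = \langle x\cdot(y\cdot g), w\rangle + \eta_{y,g}\langle x\cdot g, y\cdot w\rangle$, while the supermodule action \eqref{supermodule} gives $b([x,y],z) = \langle x\cdot(y\cdot g), w\rangle - \eta_{x,y}\langle y\cdot(x\cdot g), w\rangle$; the two agree once the dual-module relation \eqref{dualityAction} (applied with $f' = x\cdot g$) is used to identify $\eta_{y,g}\langle x\cdot g, y\cdot w\rangle$ with $-\eta_{x,y}\langle y\cdot(x\cdot g), w\rangle$.
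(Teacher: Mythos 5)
Your proof is correct and follows the same overall strategy as the paper's: duality and faithfulness of $M$, $M^*$ from superinvariance and nondegeneracy, the intertwining of $\Theta$ via \eqref{nuBracket2}, and --- the crucial point you correctly isolate --- supersymmetry \eqref{symmetricForm} as the ingredient that makes $b$ well-defined in its second slot. Two remarks. First, on well-definedness: the paper packages the argument as $\ker(\Theta\otimes\Theta)=\ker\Theta\otimes\cM+\cM\otimes\ker\Theta\subseteq\ker\Lambda$ for $\Lambda\big((f\otimes v)\otimes(g\otimes w)\big)=\langle D_{f,v}g,w\rangle$, whereas you argue element-wise; the content is identical, but your parenthetical justification for fixing the total parity (``legitimate since $b$ will turn out homogeneous'') is circular as written. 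The correct justification is immediate and needs nothing about $b$: a relation $\sum_j\nu(g_j,w_j)=0$ in the $\ZZ_2$-graded space $L$ splits into its even and odd parts, each separately zero, and on each part the sign $\eta_{D_{f,v},D_{g_j,w_j}}$ is constant, so supersymmetry collapses the sum to $\langle 0\cdot f,v\rangle=0$. Second, your invariance argument is a genuinely shorter route than the paper's: testing $b([x,y],z)=b(x,[y,z])$ on a generator $z=\nu(g,w)$ and reducing everything to the single sign identity $\eta_{y,g}\langle x\cdot g,y\cdot w\rangle=-\eta_{x,y}\langle y\cdot(x\cdot g),w\rangle$, which is \eqref{dualityAction} applied to $x\cdot g$, avoids the paper's longer chain through \eqref{symmetricForm} and \eqref{supersymmetric_b}; this is legitimate because by that stage $b$ is already well-defined and linear, so the generator formula $b(u,\nu(g,w))=\langle u\cdot g,w\rangle$ holds for arbitrary $u\in L$.
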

\begin{proof}
Since $\langle\cdot,\cdot\rangle$ is superinvariant and nondegenerate, it is clear that $M$ and $M^*$ are dual $L$-supermodules (where $M^* = M^\gets$), and both are faithful because $L \subseteq \End(M^* \oplus M)$ consists of endomorphisms. By the universal property of the tensor product, the bilinear map
$$M^* \times M \longrightarrow L, \quad (f, v) \longmapsto \nu(f,v)$$ corresponds to a unique linear map $M^* \otimes M \to L$, namely the map $\Theta$. It is clear that $\Theta$ is surjective, and for any homogeneous elements $f,g\in M^*$, $v,w\in M$, we have that
\begin{align*}
\Theta\big( & \nu(f,v) \cdot (g \otimes w)\big)
= \Theta\big( (\nu(f,v) \cdot g) \otimes w
+ \eta_{g, D_{f,v}} g \otimes (\nu(f,v) \cdot w)\big) \\
&= \nu(\nu(f,v) \cdot g, w) + \eta_{g, D_{f,v}} \nu(g, \nu(f,v) \cdot w) \\
&= [\nu(f,v), \nu(g,w)] = \nu(f,v) \cdot \nu(g,w)
= \nu(f,v) \cdot \Theta(g \otimes w),
\end{align*}
thus $\Theta$ is an epimorphism of $L$-supermodules.

We will show that $b$ is well-defined. Set $\cM = M^* \otimes M$ and consider the linear map
\begin{equation}\begin{split}
\Lambda \colon \cM \otimes \cM &\longrightarrow \FF, \\
(f\otimes v)\otimes(g\otimes w) &\longmapsto \langle \nu(f,v) \cdot g, w \rangle
= \langle D_{f,v}g, w \rangle.
\end{split}\end{equation}
Note that
$$\ker(\Theta \otimes \Theta) = \ker(\Theta) \otimes \cM + \cM \otimes \ker(\Theta)$$
and
$$\im(\Theta \otimes \Theta) = \im(\Theta) \otimes \im(\Theta) = L \otimes L .$$
We claim that $\Lambda$ restricts to the quotient
$$ (\cM \otimes \cM) / \ker(\Theta \otimes \Theta) \cong L \otimes L .$$
To show the claim we need to prove that $\ker(\Theta \otimes \Theta) \subseteq \ker\Lambda$.
It is clear that $\ker(\Theta) \otimes \cM \subseteq \ker\Lambda$. Since $\langle\cdot,\cdot\rangle$ is supersymmetric, we have that 
\begin{equation*}
\Lambda\big((f\otimes v)\otimes(g\otimes w)\big) = \langle D_{f,v} g, w \rangle
= \eta_{D_{f,v}, D_{g,w}} \langle D_{g,w} f, v \rangle,
\end{equation*}
which shows that $\cM \otimes \ker(\Theta) \subseteq \ker\Lambda$. Consequently, $\ker(\Theta \otimes \Theta) \subseteq \ker\Lambda$ and $\Lambda$ induces a linear map $\widetilde{\Lambda} \colon L \otimes L \to \FF$. It is clear that $\widetilde{\Lambda}$ corresponds to a bilinear map $L \times L \to \FF$, namely $b$. We have proven that $b$ is well-defined. Note that since $\langle\cdot,\cdot\rangle$ is supersymmetric, we have that
\begin{align*}
b(\nu(f,v), \nu(g,w)) &= \langle D_{f,v}g, w \rangle
= \eta_{D_{f,v}, D_{g,w}} \langle D_{g,w}f, v \rangle \\
&= \eta_{D_{f,v}, D_{g,w}} b(\nu(g,w), \nu(f,v)),
\end{align*}
so that $b$ is supersymmetric. It is clear that $b$ is nondegenerate because $\langle\cdot,\cdot\rangle$ is nondegenerate. Finally, for each homogeneous $x\in L$, $f,g\in M^*$, $v,w\in M$, we have that
\begin{align*}
b([&\nu(f,v),x], \nu(g,w)) = - \eta_{x, D_{f,v}} b([x,\nu(f,v)], \nu(g,w)) \\
&= - \eta_{x, D_{f,v}} b( \nu(x \cdot f, v)
 + \eta_{x,f} \nu(f, x \cdot v), \nu(g, w) ) =_{\eqref{bilinearRestriction}} \\
&= - \eta_{x, D_{f,v}} \langle \nu(x \cdot f, v) \cdot g, w \rangle
 - \eta_{x,v} \langle \nu(f, x \cdot v) \cdot g, w \rangle
=_\eqref{symmetricForm} \\
&= - \eta_{x, D_{f,v}} \eta_{D_{x\cdot f,v}, D_{g,w}} \langle \nu(g,w) \cdot (x \cdot f), v \rangle \\
&\quad - \eta_{x,v} \eta_{D_{f, x\cdot v}, D_{g,w}} \langle \nu(g,w) \cdot f, x \cdot v \rangle \\
&= - (\eta_{x,D_{f,v}}\eta_{D_{x\cdot f, v}, D_{g,w}}) (\eta_{x\cdot f, D_{g,w}}\eta_{x,f})
\langle f, x \cdot (\nu(g,w) \cdot v) \rangle \\
& \quad + (\eta_{x,v} \eta_{D_{f,x\cdot v}, D_{g,w}}) \eta_{f, D_{g,w}}
\langle f, \nu(g,w) \cdot (x \cdot v) \rangle \\
&= - (\eta_{x,v}\eta_{g,v}\eta_{w,v})
\langle f, x \cdot (\nu(g,w) \cdot v) - \eta_{x, D_{g,w}} \nu(g,w) \cdot (x \cdot v)) \rangle \\
&= - (\eta_{x,v}\eta_{g,v}\eta_{w,v})
\langle f, [x,\nu(g,w)] \cdot v \rangle
=_{\eqref{dualityAction}} \\
&= (\eta_{x,v}\eta_{g,v}\eta_{w,v})
(\eta_{f,x}\eta_{f,g}\eta_{f,w})
\langle [x,\nu(g,w)] \cdot f, v \rangle =_{\eqref{bilinearRestriction}} \\
&= (\eta_{x,v}\eta_{g,v}\eta_{w,v})
(\eta_{f,x}\eta_{f,g}\eta_{f,w})
b([x,\nu(g,w)], \nu(f,v)) =_{\eqref{supersymmetric_b}} \\
&= b(\nu(f,v), [x, \nu(g,w)]),
\end{align*}
which shows that $b$ is also invariant.
\end{proof}


The following result generalizes \cite[Lemma~1.1]{F73}:

\begin{theorem}{\textnormal{[\textbf{Faulkner correspondence}]}} \label{correspondenceTheorem} \\
There is a bijective correspondence between (isomorphy classes of) objects in $\FLSM$ and objects in $\GJSP$, which in turn restricts to a correspondence between objects in $\FLM$ and objects in $\GJP$. Furthermore, given $(L, M, b) \in \FLSM$ and its associated object $(\cV, \langle\cdot,\cdot\rangle) \in \GJSP$, we have $\bAut(L, M, b) \simeq \bAut(\cV, \langle\cdot,\cdot\rangle)$.
\end{theorem}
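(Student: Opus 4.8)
The plan is to exhibit the two constructions of the preceding propositions as mutually inverse assignments and then upgrade the resulting bijection to an isomorphism of automorphism group schemes. Write $\mathcal{F}$ for the assignment $(L,M,b)\mapsto(\cV_{L,M},\langle\cdot,\cdot\rangle)$ produced by Proposition~\ref{GJP}, and $\mathcal{G}$ for the assignment $(\cV,\langle\cdot,\cdot\rangle)\mapsto(\instr(\cV),\cV^+,b)$ produced by Proposition~\ref{restrictionTheorem}. Proposition~\ref{GJP} shows $\mathcal{F}$ lands in $\GJSP$, and Proposition~\ref{restrictionTheorem} shows $\mathcal{G}$ lands in $\FLSM$ (faithfulness being part of its statement). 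The restriction to $\FLM\leftrightarrow\GJP$ is immediate from degrees: if $L_{\bar1}=0=M_{\bar1}$ then $M^\gets$ is even and $\cV_{L,M}$ has $\cV_{\bar1}=0$; conversely if $\cV_{\bar1}=0$ then every $\nu(f,v)$ is even, so $\instr(\cV)_{\bar1}=0$ and $\cV^+_{\bar1}=0$.

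I would first check $\mathcal{G}\circ\mathcal{F}\cong\id_{\FLSM}$. Starting from $(L,M,b)$, faithfulness and Proposition~\ref{instrTheorem} give the isomorphism $\Upsilon\colon L=\instr(L,M)\xrightarrow{\sim}\instr(\cV_{L,M})$, $[f,v]\mapsto\nu(f,v)$, together with $(\cV_{L,M})^+=M$; comparing the formula $b([f,v],[g,w])=\langle D_{f,v}g,w\rangle$ implicit in Proposition~\ref{GJP} with the form $b'(\nu(f,v),\nu(g,w))=\langle D_{f,v}g,w\rangle$ produced by Proposition~\ref{restrictionTheorem} shows $\Upsilon$ carries $b$ to $b'$, so $(\instr(\cV_{L,M}),M,b')\cong(L,M,b)$. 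For $\mathcal{F}\circ\mathcal{G}\cong\id_{\GJSP}$, start from $(\cV,\langle\cdot,\cdot\rangle)$, set $L=\instr(\cV)$ and identify $\cV^-=(\cV^+)^\gets$ through the nondegenerate superinvariant pairing as in Proposition~\ref{restrictionTheorem}. The one genuine verification is that the bracket $[f,v]$ defined inside $\mathcal{F}$ by $b(x,[f,v])=\langle x\cdot f,v\rangle$ agrees with $\nu(f,v)$: testing against the spanning elements $x=\nu(g,w)$ and using $b(\nu(g,w),\nu(f,v))=\langle D_{g,w}f,v\rangle$ together with nondegeneracy of $b$ gives $[f,v]=\nu(f,v)$, whence the reconstructed triple products $\{f,v,g\}^-=[f,v]\cdot g=D_{f,v}g$ and the pairing coincide with the original ones. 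Since both constructions manifestly send isomorphisms to isomorphisms, the assignments descend to mutually inverse bijections on isomorphy classes.

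For the group scheme statement I would construct a natural transformation and its inverse over an arbitrary commutative associative unital $\FF$-algebra $R$. Given an $R$-point $(\varphi_0,\varphi^+)$ of $\bAut(L,M,b)$, let $\varphi^-\colon M^\gets\to M^\gets$ be the contragredient of $\varphi^+$ along $\varphi_0$ (the unique map with $\langle\varphi^-(f),\varphi^+(v)\rangle=\langle f,v\rangle$); the identity $\varphi_0([f,v])=[\varphi^-(f),\varphi^+(v)]$, obtained by pairing against $\varphi_0(x)$ and using invariance of $b$ and preservation of the pairing, shows that $(\varphi^-,\varphi^+)$ preserves the triple products, i.e. is an $R$-point of $\bAut(\cV,\langle\cdot,\cdot\rangle)$. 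Conversely, given $(\psi^-,\psi^+)$ preserving the triple products and the pairing, set $\varphi^+=\psi^+$ and let $\varphi_0$ be conjugation by $\psi^-\oplus\psi^+$ on $L=\instr(\cV)\subseteq\End(\cV^-\oplus\cV^+)$; preservation of the triple products guarantees $\varphi_0$ maps $L_R$ onto itself and sends $\nu(f,v)\mapsto\nu(\psi^-f,\psi^+v)$, and since $b$ is built from the preserved form $\langle\cdot,\cdot\rangle$ we get $\varphi_0\in\bAut(L,b)(R)$. Faithfulness of $M$, which persists after the (flat, since $\FF$ is a field) base change $\FF\to R$, shows that $\varphi_0$ is already determined by $\varphi^+$, so these two assignments are mutually inverse; because every ingredient---duality, $\instr$, the bracket $[f,v]$, the triple products, and both bilinear forms---is defined by constructions that commute with base change, the isomorphism is natural in $R$ and yields $\bAut(L,M,b)\simeq\bAut(\cV,\langle\cdot,\cdot\rangle)$.

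The main obstacle I anticipate is the group scheme part rather than the object-level bijection: one must carry out the correspondence functorially over every base-change algebra $R$, keep track of the correct dualization ($\varphi^\gets$ versus $\varphi^\to$) and the signs $\eta$ so that the contragredient $\varphi^-$ genuinely preserves both the triple products and the bilinear form, and verify that faithfulness---hence the effective reduction of the data on both sides to a single element of $\bGL(\cV^+)$---survives base change. Once these naturality and sign-bookkeeping points are settled, the isomorphism of group schemes follows formally from the object-level content of Propositions~\ref{GJP}, \ref{instrTheorem} and \ref{restrictionTheorem}.
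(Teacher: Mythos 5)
Your proposal is correct and follows essentially the same route as the paper: the object-level bijection is obtained by checking that the constructions of Propositions~\ref{GJP} and \ref{restrictionTheorem} are mutually inverse (with Proposition~\ref{instrTheorem} supplying $L=\instr(L,M)$ and the identification $[f,v]=\nu(f,v)$), and the group scheme isomorphism is obtained exactly as in the paper, by passing from $(\varphi_0,\varphi^+)$ to the contragredient $\varphi^-=((\varphi^+)^\gets)^{-1}$ in one direction and by conjugation on $\instr(\cV)\subseteq\End(\cV^-\oplus\cV^+)$ in the other, with everything natural in the base ring $R$.
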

\begin{proof}
The correspondence of objects in both classes is consequence of Propositions~\ref{GJP}, \ref{instrTheorem} and \ref{restrictionTheorem} (it is easy to see that both constructions are inverses of each other). Let $(L, M, b)$ and $(\cV, \langle\cdot,\cdot\rangle)$ be associated objects in this correspondence.

\smallskip

Fix $(\varphi^-, \varphi^+) \in \Aut(\cV, \langle\cdot,\cdot\rangle)$. Note that $\varphi^- = ((\varphi^+)^\gets)^{-1}$. Consider the map
$\phi_0 \colon \End(\cV^-) \times \End(\cV^+) \to \End(\cV^-) \times \End(\cV^+)$ given by $\phi_0(x) := \varphi \circ x \circ \varphi^{-1}$. Let $f \in M^* = \cV^-$ and $v \in M = \cV^+$. Then
\begin{align*}
\phi_0(\nu(f,v))
&= \big(\varphi^- \circ D_{f,v} \circ (\varphi^-)^{-1}, - \eta_{f,v} \varphi^+ \circ D_{v,f} \circ (\varphi^+)^{-1}\big) \\
&= \big(D_{\varphi^-(f), \varphi^+(v)}, - \eta_{f,v} D_{\varphi^+(v), \varphi^-(f)}\big)
= \nu(\varphi^-(f), \varphi^+(v)),
\end{align*}
so that $\phi_0$ restricts to an element $\varphi_0 \in \GL(L)$. Besides, it is easy to see that $\varphi_0 \in \Aut(L)$. Also,
\begin{align*}
b\big(\varphi_0(&\nu(f, v)) , \varphi_0(\nu(g, w))\big)
= b\big(\nu(\varphi^-(f), \varphi^+(v)) , \nu(\varphi^-(g), \varphi^+(w))\big)
=_{\eqref{bilinearRestriction}} \\
&= \langle \nu(\varphi^-(f), \varphi^+(v)) \cdot \varphi^-(g), \varphi^+(w) \rangle =_{\eqref{tripleProductsFromAction}} \\
&= \langle \{ \varphi^-(f), \varphi^+(v), \varphi^-(g) \}, \varphi^+(w) \rangle
= \langle \varphi^-(\{f,v,g\}), \varphi^+(w) \rangle \\
&=  \langle \{ f,v,g \}, w \rangle =_{\eqref{tripleProductsFromAction}}
\quad \langle \nu(f,v) \cdot g, w \rangle =_{\eqref{bilinearRestriction}} \\
&= b\big(\nu(f,v), \nu(g,w)\big),
\end{align*}
so we get that $\varphi_0 \in \Aut(L, b)$. Moreover, for $x\in L$ we have that
\begin{align*}
\langle f, \varphi^+(&x \cdot v) \rangle
= \langle (\varphi^-)^{-1}(f), x \cdot v \rangle =_{\eqref{flipDualBilinearForm}} \\
&= \eta_{f, x\cdot v} \langle x \cdot v, (\varphi^-)^{-1}(f) \rangle =_\eqref{bilinearRestriction} \\
&= \eta_{f, x\cdot v} b\big(x, \nu(v, (\varphi^-)^{-1}(f))\big) \\
&= \eta_{f, x\cdot v} b\big(\varphi_0(x), \varphi_0(\nu(v, (\varphi^-)^{-1}(f)))\big) \\
&= \eta_{f, x\cdot v} b\big(\varphi_0(x), \nu(\varphi^+(v), f)\big) =_{\eqref{bilinearRestriction}} \\
&= \eta_{f, x\cdot v} \langle \varphi_0(x) \cdot \varphi^+(v), f \rangle =_{\eqref{flipDualBilinearForm}}\\
&= \langle f, \varphi_0(x) \cdot \varphi^+(v) \rangle
\end{align*}
with $\langle\cdot,\cdot\rangle$ nondegenerate, which implies that
$\varphi^+(x \cdot v) = \varphi_0(x) \cdot \varphi^+(v)$. Thus $(\varphi_0, \varphi^+) \in \Aut(L, M, b)$ and $\Aut(\cV, \langle\cdot,\cdot\rangle) \leq \Aut(L, M, b)$.

\smallskip

Now, take $(\varphi_0, \varphi^+) \in \Aut(L, M, b)$. Set $\varphi^- := ((\varphi^+)^\gets)^{-1}$, so that we have
$\langle \varphi^-(f), \varphi^+(v) \rangle = \langle f, v \rangle$
for any $f \in M^*$, $v\in M$. Then,
\begin{align*}
b\big(x, \varphi_0(&\nu(f,v))\big) = b\big(\varphi_0^{-1}(x), \nu(f,v)\big) =_{\eqref{bilinearFormsEc}} \\
&= \langle \varphi_0^{-1}(x) \cdot f, v \rangle =_{\eqref{dualityAction}} \\
&= - \eta_{x,f} \langle f, \varphi_0^{-1}(x) \cdot v \rangle
= - \eta_{x,f} \langle \varphi^-(f), \varphi^+(\varphi_0^{-1}(x) \cdot v) \rangle \\
&= - \eta_{x,f} \langle \varphi^-(f), x \cdot \varphi^+(v) \rangle =_{\eqref{dualityAction}} \\
&= \langle x \cdot \varphi^-(f), \varphi^+(v) \rangle =_{\eqref{bilinearFormsEc}} \\
&= b\big(x, \nu(\varphi^-(f), \varphi^+(v))\big)
\end{align*}
with $b$ nondegenerate, thus we get again that $\varphi_0(\nu(f,v)) = \nu(\varphi^-(f), \varphi^+(v))$. Also,
\begin{align*}
\langle & \varphi^-(x \cdot f), v \rangle
= \langle x \cdot f, (\varphi^+)^{-1}(v) \rangle =_{\eqref{dualityAction}} \\
&= - \eta_{x,f} \langle f, x \cdot (\varphi^+)^{-1}(v) \rangle 
= - \eta_{x,f} \langle \varphi^-(f), \varphi^+\big(x \cdot (\varphi^+)^{-1}(v)\big) \rangle \\
&= - \eta_{x,f} \langle \varphi^-(f), \varphi_0(x) \cdot v \rangle =_{\eqref{dualityAction}} \\
&= \langle \varphi_0(x) \cdot \varphi^-(f), v \rangle
\end{align*}
with $\langle\cdot,\cdot\rangle$ nondegenerate, which implies that $(\varphi_0, \varphi^-) \in \Aut(L,M^*,b)$. Finally, note that
\begin{align*}
\{ \varphi^-(f), \varphi^+(v), \varphi^-(g) \}
&= \nu(\varphi^-(f), \varphi^+(v)) \cdot \varphi^-(g)
= \varphi_0(\nu(f,v)) \cdot \varphi^-(g) \\
&= \varphi^-(\nu(f,v) \cdot g)
= \varphi^-(\{ f,v,g \}),
\end{align*}
and similarly $\{ \varphi^+(v), \varphi^-(f), \varphi^+(w) \} = \varphi^+(\{ v,f,w \})$,
thus $(\varphi^-, \varphi^+) \in \Aut(\cV, \langle\cdot,\cdot\rangle)$
and $\Aut(L,M,b) \leq \Aut(\cV, \langle\cdot,\cdot\rangle)$.

The result follows since the same arguments can be used with automorphism group schemes, and because both constructions of automorphisms are inverses of each other. (Note that for the automorphism group schemes, the bilinear forms $b$ and $\langle\cdot,\cdot\rangle$ are extended as $R$-bilinear forms, for each corresponding associative commutative unital $\FF$-algebra $R$. In this case, it is easy to see that the duals relative to $\langle\cdot,\cdot\rangle$ used in the proof are still well-defined.)
\end{proof}

\smallskip

\begin{remark}
Note that if $(\cV, \langle\cdot,\cdot\rangle) \in \GJSP$ and $(L, M, b) \in \FLSM$ are associated objects in the correspondence from Theorem~\ref{correspondenceTheorem}, then for any $\lambda \in \FF^\times$ we have also that $(\cV, \lambda \langle\cdot,\cdot\rangle) \in \GJSP$ and $(L, M, \lambda b) \in \FLSM$ are associated objects.
\end{remark}

\medskip

\begin{remark}
If we had $\bAut(\cV) \simeq \bAut(L, M)$ for associated objects in the Faulkner correspondence, then the Transfer Theorems in \cite{EKmon} show that the classifications up to equivalence (or up to isomorphism) of gradings by abelian grups would be the same for both objects. However, in general we just have $\bAut(\cV, t) \simeq \bAut(L, M, b)$ for associated objects, which implies a bijective correspondence for gradings (by abelian groups) that are well-behaved with the bilinear forms in a certain sense. Note that if an object $(\cV, t) \in \GJP$ is given by a simple Jordan pair with its generic trace, it is well known that $\bAut(\cV, t) = \bAut(\cV)$ (see \cite[(16.7)]{L75}).
\end{remark}

\medskip

\begin{remark}
The problem that motivated this work is how to extend ``good'' bilinear forms from a finite-dimensional simple Kantor pair $\cV$ to its associated Kantor-Lie algebra $L = \kan(\cV) = \bigoplus_{i = -2}^2 L_i$ obtained by the Kantor construction. This can be useful to recover Killing forms (up to multiplication by a scalar), or to find another bilinear form with nice properties when the Killing form is degenerate (which might happen if the characteristic of the field is positive). The answer to this problem is given by the Faulkner construction, as follows. First, we use the bilinear form $\langle\cdot,\cdot\rangle$ on $\cV$ to get the associated bilinear form $b$ on $L_0$, from the Faulkner correspondence. Then, using that $L_2$ and $L_{-2}$ are dual $L_0$-modules, we can use $b$ to get a third bilinear form $L_{-2} \times L_2 \to \FF$, from the Faulkner correspondence. Together, these three bilinear forms determine a unique bilinear form that is homogeneous for the $\ZZ$-grading of the Kantor construction, which also has properties analogous to the ones of a Killing form.
\end{remark}

\section{Tensor products of GJSP} \label{sectionTensor}

In this section we will transfer some results of tensor products from supermodules to generalized Jordan superpairs.

\begin{df}
For $i=1,2$, let $(\cV_i, \langle\cdot,\cdot\rangle_i)$ be objects in $\GJSP$ and $(L_i, M_i, b_i)$ their associated objects in $\FLSM$. Consider the Lie superalgebra $L = L_1 \oplus L_2$ with the bilinear form $b = b_1 \perp b_2$. Also, consider the trivial action of $L_i$ on $M_j$ for $i \neq j$. Then $M = M_1 \otimes M_2$ is a (not necessarily faithful) $L$-supermodule. By Prop.~\ref{GJP}, $(L, M, b)$ can be used to construct an object in $\GJSP$, which will be denoted by $\cV_1 \otimes \cV_2$ (or $\cV_1 \overleftarrow{\otimes} \cV_2$) and referred to as the \emph{(left) tensor product} of the generalized Jordan superpairs $\cV_1$ and $\cV_2$. The \emph{right tensor product}, denoted $\cV_1 \overrightarrow{\otimes} \cV_2$, can be defined similarly. Notice that $(\cV_1 \otimes \cV_2)^\sigma = \cV_1^\sigma \otimes \cV_2^\sigma$ for $\sigma = \pm$, and the parity map is given by $\varepsilon(v \otimes w) := \varepsilon(v) + \varepsilon(w)$ for homogeneous elements $v\in \cV^\sigma_1$, $w\in \cV^\sigma_2$. 

Also, note that $\instr(\cV_1 \otimes \cV_2)$ is a quotient of $L = L_1 \oplus L_2$, and in general $\instr(\cV_1 \otimes \cV_2) \neq L$. (For instance, if $\cV_1 = \cV_2$ are $1$-dimensional simple Jordan pairs and $\cV = \cV_1 \otimes \cV_2$, then $\cV$ is also $1$-dimensional, so that $\dim\instr(\cV) \leq 1 = \dim\instr(\cV_i) $, hence $\instr(\cV) \neq \instr(\cV_1) \oplus \instr(\cV_2) $.) Since the tensor product operator is associative for supermodules, it is also associative for objects in $\GJSP$. It is also clear that $\cV_1 \overleftarrow{\otimes} \cV_2 \cong \cV_2 \overrightarrow{\otimes} \cV_1$. Note that $\cV_1 \overleftarrow{\otimes} \cV_2 = \cV_1 \overrightarrow{\otimes} \cV_2$ in $\GJP$.

\smallskip

We define the \emph{direct sum} of objects $(L_i, M_i, b_i)$ in $\FLSM$, for $i = 1,\dots,n$, as $(\bigoplus_i L_i, \bigoplus_i M_i, \perp_i b_i)$. Notice that direct sums in $\FLSM$ correspond to direct sums in $\GJSP$. Also, note that the tensor product of supermodules is not distributive for the sum (since the Lie superalgebra acting is not preserved). Consequently, the tensor product is not distributive for direct sums in $\GJSP$.
\end{df}

\begin{notation}
Let $\cV$ and $\cW$ be generalized Jordan superpairs, and $R$ an associative commutative unital $\FF$-algebra. We will denote the extension of scalars by $\cV_R := \cV \otimes R$. For each $\lambda \in R^\times$ we have an automorphism $c_\lambda = (c_\lambda^-, c_\lambda^+) \in \Aut_R(\cV_R)$ given by
\begin{equation}
\text{ $c_\lambda^\sigma(x) := \lambda^{\sigma 1}x$, \quad for each $x\in\cV^\sigma_R$. }
\end{equation}

We will denote $\bG_m := \bGL_1$. Consider the subgroup scheme
$$\bAut(\cV) \otimes_{\bG_m} \bAut(\cW) \leq \bGL(\cV^- \otimes \cW^-) \times \bGL(\cV^+ \otimes \cW^+)$$
defined by
\begin{equation} \label{tensorAutSchemesPairs}
(\bAut(\cV) \otimes_{\bG_m} \bAut(\cW))(R) := \{ f \otimes g \med
f \in\Aut_R(\cV_R), g\in\Aut_R(\cW_R) \},
\end{equation}
where $f \otimes g := (f^- \otimes g^-, f^+ \otimes g^+)$. Here, we identify $(\cV \otimes \cW)_R$ with $\cV_R \otimes_R \cW_R$, and $f \otimes g$ means $f \otimes_R g$ (i.e., tensors are $R$-bilinear).

Note that the morphism $\bAut(\cV) \times \bAut(\cW) \to \bAut(\cV) \otimes_{\bG_m} \bAut(\cW)$ sending $(f, g) \in \Aut_R(\cV_R) \times \Aut_R(\cW_R)$ to $f \otimes g \in \Aut_R(\cV_R) \otimes_{R^\times} \Aut_R(\cW_R)$ has kernel
$$ \bT_1(R) := \{(c_\lambda, c^{-1}_\lambda) \in \Aut_R(\cV_R) \times \Aut_R(\cW_R) \med
\lambda \in R^\times \},$$
so we have that
\begin{equation}
\bAut(\cV) \otimes_{\bG_m} \bAut(\cW) \simeq (\bAut(\cV) \times \bAut(\cW))/\bT_1.
\end{equation}
Also, note that $\bT_1 \simeq \bG_m$ is a $1$-torus. More in general, given generalized Jordan superpairs $\cV_1, \dots, \cV_n$, we can define in a similar way the tensor product
$$ \bigotimes_{i=1}^n {}_{\bG_m} \bAut(\cV_i), $$
which is a quotient of $\prod_{i=1}^n \bAut(\cV_i)$ by an $(n-1)$-torus given by
$$ \bT_{n-1}(R) := \{(c_{\lambda_1},\dots, c_{\lambda_n}) \in \prod_{i=1}^n \Aut_R((\cV_i)_R)
\med \lambda_i \in R, \; \prod_{i=1}^n \lambda_i = 1 \}.$$
Note that the tensor products $\bAut(\cV) \otimes_{\bG_m} \bAut(\cW)$ are a particular case of central product of group schemes relative to $\bG_m$ (a definition for central product of groups can be found in \cite[Chap.2, p.29]{G80}).
\end{notation}

\medskip

\begin{proposition} \label{tensorProductProperties}
Let $\cV_i$ be nonzero objects in $\GJSP$ for $i = 1,2$ and $\cV = \cV_1 \otimes \cV_2$. Then:
\begin{itemize}
\item[1)] The bilinear form $\langle\cdot,\cdot\rangle$ on $\cV$ is given by the tensor superproduct of the bilinear forms of $\cV_1$ and $\cV_2$, that is,
$$ \langle f_1 \otimes f_2, v_1 \otimes v_2 \rangle =
\eta_{f_2, v_1} \langle f_1, v_1 \rangle \langle f_2, v_2 \rangle .$$
\item[2)] The generators of $\instr(\cV)$ are of the form
$$ \nu(f_1 \otimes f_2, v_1 \otimes v_2)
= \eta_{f_2, v_1} \Big( \langle f_2, v_2 \rangle \nu(f_1, v_1)
+ \langle f_1, v_1 \rangle \nu(f_2, v_2)
\Big). $$
\item[3)] The triple products on $\cV$, for homogeneous elements $x_i, z_i \in \cV_i^\sigma$ $y_i \in \cV_i^{-\sigma}$, are given by
\begin{align*}
\{x_1 &\otimes x_2, y_1 \otimes y_2, z_1 \otimes z_2\} = \\
&= \eta_{x_2,y_1} \Big( \{x_1,y_1,z_1\} \otimes \langle x_2, y_2 \rangle z_2 
+ \eta_{z_1,x_2}\eta_{z_1,y_2} \langle x_1, y_1 \rangle z_1 \otimes \{x_2,y_2,z_2\} \Big).
\end{align*}
\item[4)] $\bAut(\cV_1, \langle\cdot,\cdot\rangle) \otimes_{\bG_m}
\bAut(\cV_2, \langle\cdot,\cdot\rangle) \leq \bAut(\cV, \langle\cdot,\cdot\rangle)$.
\end{itemize}
\end{proposition}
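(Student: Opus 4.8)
The plan is to prove the four claims in order, since each builds naturally on the explicit construction of $\cV = \cV_1 \otimes \cV_2$ via the Faulkner correspondence. By definition, $\cV$ arises from the triple $(L, M, b)$ with $L = L_1 \oplus L_2$, $M = M_1 \otimes M_2$, and $b = b_1 \perp b_2$, where the $L_i$ act trivially across factors. For part (1), I would simply observe that the bilinear form on $\cV$ produced by Prop.~\ref{GJP} is the pairing form $\langle\cdot,\cdot\rangle\colon M^* \times M \to \FF$, and that $M^\gets = (M_1 \otimes M_2)^\gets \cong M_1^\gets \otimes M_2^\gets$ by the tensor-dual isomorphism already established in the preliminaries (equation~\eqref{tensorProductBilinearForms}). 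The formula for $\langle f_1 \otimes f_2, v_1 \otimes v_2 \rangle$ is then exactly the left tensor superproduct \eqref{tensorProductBilinearForms}, so part (1) is essentially a citation.

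For part (2), the key computation is to identify the element $[f_1 \otimes f_2, v_1 \otimes v_2] \in L$ via its defining property \eqref{bilinearFormsEc}, namely $b(x, [f,v]) = \langle x \cdot f, v\rangle$ for all homogeneous $x \in L$. Writing $x = x^{(1)} + x^{(2)}$ with $x^{(i)} \in L_i$ and using the trivial cross-action together with the tensor-module action \eqref{leftTensorMods}, I expect $\langle x \cdot (f_1 \otimes f_2), v_1 \otimes v_2\rangle$ to split into two terms, each isolating one $L_i$ via the orthogonal decomposition $b = b_1 \perp b_2$. Matching against $b(x, -)$ and using nondegeneracy should yield $[f_1 \otimes f_2, v_1 \otimes v_2] = \eta_{f_2,v_1}\bigl(\langle f_2, v_2\rangle [f_1,v_1] + \langle f_1,v_1\rangle [f_2,v_2]\bigr)$, where $[f_i, v_i] \in L_i$. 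Applying $\Upsilon$ (Prop.~\ref{instrTheorem}) to pass from brackets in $L$ to the operators $\nu$ then gives the stated formula for part (2). Part (3) follows by applying the generator from (2) to a third element $z_1 \otimes z_2 \in M$: by \eqref{tripleProductsFromAction} the triple product is $[f_1\otimes f_2, v_1\otimes v_2]\cdot(z_1 \otimes z_2)$, and substituting the expression from (2) and expanding the action of each $[f_i,v_i] = D_{f_i,v_i}$ through \eqref{leftTensorMods} — again using triviality of cross-actions — should produce the two triple-product terms with the sign factors $\eta_{x_2,y_1}$ and $\eta_{z_1,x_2}\eta_{z_1,y_2}$. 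The main obstacle throughout (1)--(3) will be bookkeeping of the $\eta$ sign factors: tracking the parities carefully as elements are permuted past one another in the tensor action and in the dual pairing, which is where Faulkner-type proofs typically become delicate in the super setting.

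For part (4), the inclusion $\bAut(\cV_1,\langle\cdot,\cdot\rangle) \otimes_{\bG_m} \bAut(\cV_2,\langle\cdot,\cdot\rangle) \leq \bAut(\cV,\langle\cdot,\cdot\rangle)$, I would work functorially over an arbitrary base algebra $R$ and show that for $f \in \Aut_R((\cV_1)_R,\langle\cdot,\cdot\rangle)$ and $g \in \Aut_R((\cV_2)_R,\langle\cdot,\cdot\rangle)$, the map $f \otimes g = (f^- \otimes g^-, f^+ \otimes g^+)$ lies in $\Aut_R(\cV_R,\langle\cdot,\cdot\rangle)$. That $f \otimes g$ is a homomorphism of pairs follows from the explicit triple-product formula in part (3), since each $f^\sigma, g^\sigma$ respects the triple products of its factor and the cross-terms are $\FF$-bilinear (hence $R$-bilinear) in a way compatible with the tensor. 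That $f \otimes g$ preserves the form $\langle\cdot,\cdot\rangle$ follows from part (1): the tensor superproduct of two invariant forms is preserved by the tensor of two form-preserving maps, the scalar $\lambda$-ambiguity being precisely absorbed by the central $\bG_m$ (the torus $\bT_1$ quotiented out in defining $\otimes_{\bG_m}$). The main subtlety in part (4) is checking well-definedness on the central-product quotient — that rescaling $(f,g) \mapsto (c_\lambda f, c_\lambda^{-1} g)$ leaves $f \otimes g$ unchanged as an automorphism of $\cV$ — but this is immediate from $c_\lambda^- \otimes c_{\lambda}^{-,-1} = \id$ on $\cV^-$ and the analogous identity on $\cV^+$, so the morphism factors through $\bT_1$ as already noted in the Notation.
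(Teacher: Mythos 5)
Your proposal is correct and follows essentially the same route as the paper's proof: part (1) by citing the tensor-dual isomorphism \eqref{tensorProductBilinearForms}, part (2) by identifying $[f_1\otimes f_2, v_1\otimes v_2]$ through the defining relation \eqref{bilinearFormsEc}, the tensor action \eqref{leftTensorMods}, the orthogonality $b = b_1 \perp b_2$ and nondegeneracy of $b$, part (3) by applying the resulting operator to $z_1 \otimes z_2$ via \eqref{tripleProductsFromAction}, and part (4) by a direct check over $R$ using the formulas from (1) and (3). The one sign subtlety you flag but do not carry out is exactly the auxiliary identity $\eta_{x,f_1}\eta_{x,v_1}\langle f_1,v_1\rangle = \langle f_1,v_1\rangle$ (the paper's \eqref{auxEq1}, a consequence of homogeneity of the form), which is what makes the two terms in (2) come out with the common factor $\eta_{f_2,v_1}$.
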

\begin{proof}
1) This follows because duality of supermodules satisfy the property
$(M \otimes N)^* \cong M^* \otimes N^*$, where the bilinear pairing form $(M^* \otimes N^*) \times (M \otimes N) \to \FF$ is the tensor superproduct of the bilinear pairing forms $M^* \times M \to \FF$ and $N^* \times N \to \FF$.

2) Take homogeneous elements $x\in \instr(\cV)$, $f_i\in\cV_i^-$, $v_i\in\cV_i^+$. We claim that
\begin{equation} \label{auxEq1}
\eta_{x,f_1}\eta_{x,v_1} \langle f_1, v_1 \rangle = \langle f_1, v_1 \rangle.
\end{equation}
Indeed, if $\varepsilon(f_1) = \varepsilon(v_1)$ we have that $\eta_{x,f_1}\eta_{x,v_1} = 1$, otherwise we have that $\langle f_1, v_1 \rangle = 0$ because $\langle\cdot,\cdot\rangle$ is homogeneous, and in both cases the claim follows. Then
\begin{align*}
b\big(&x, \nu(f_1 \otimes f_2, v_1 \otimes v_2)\big) =_\eqref{bilinearFormsEc} \\
&= \langle x \cdot (f_1 \otimes f_2), v_1 \otimes v_2 \rangle =_\eqref{leftTensorMods} \\
&= \langle (x \cdot f_1) \otimes f_2
+ \eta_{x, f_1} f_1 \otimes (x \cdot f_2), v_1 \otimes v_2 \rangle \\
&= \eta_{f_2, v_1} \langle x \cdot f_1, v_1 \rangle \langle f_2, v_2 \rangle
+ \eta_{x, f_1} \eta_{x \cdot f_2, v_1} \langle f_1, v_1 \rangle \langle x \cdot f_2, v_2 \rangle =_\eqref{bilinearFormsEc} \\
&= \eta_{f_2, v_1} b\big(x, \nu(f_1, v_1)\big) \langle f_2, v_2 \rangle
+ \eta_{f_2,v_1} \eta_{x,f_1} \eta_{x,v_1} \langle f_1, v_1 \rangle b\big(x, \nu(f_2, v_2)\big) =_\eqref{auxEq1} \\
&= b\big(x, \eta_{f_2, v_1} \langle f_2, v_2 \rangle \nu(f_1, v_1)
+ \eta_{f_2, v_1} \langle f_1, v_1 \rangle \nu(f_2, v_2) \big),
\end{align*}
and since $b$ is nondegenerate, the property follows.

3) The triple product for homogeneous elements is given by
\begin{align*}
\{&x_1 \otimes x_2, y_1 \otimes y_2, z_1 \otimes z_2\} =_\eqref{tripleProductsFromAction} \\
&= \nu(x_1 \otimes x_2, y_1 \otimes y_2) \cdot (z_1 \otimes z_2) \\
&= \eta_{x_2,y_1} \Big(\langle x_2, y_2 \rangle \nu(x_1, y_1) + 
\langle x_1, y_1 \rangle \nu(x_2, y_2)\Big) \cdot (z_1 \otimes z_2) \\
&= \eta_{x_2,y_1} \Big( \langle x_2, y_2 \rangle (\nu(x_1, y_1) \cdot z_1) \otimes z_2 \\
& \qquad\qquad + \eta_{z_1,x_2}\eta_{z_1,y_2}
\langle x_1, y_1 \rangle z_1 \otimes (\nu(x_2,y_2) \cdot z_2) \Big) =_\eqref{tripleProductsFromAction} \\
&= \eta_{x_2,y_1} \Big( \{x_1,y_1,z_1\} \otimes \langle x_2, y_2 \rangle z_2
+ \eta_{z_1,x_2}\eta_{z_1,y_2} \langle x_1, y_1 \rangle z_1 \otimes \{x_2,y_2,z_2\} \Big).
\end{align*}

4) Let $\varphi_i \in \Aut_R((\cV_i)_R, \langle\cdot,\cdot\rangle)$ for $i = 1,2$ and $\varphi = \varphi_1 \otimes \varphi_2$ (where $\otimes = \otimes_R$). We will first show that $\langle\cdot,\cdot\rangle$ is $\varphi$-invariant. Indeed, for any homogeneous $f_i \in \cV_i^-$, $v_i \in \cV_i^+$, we have that
\begin{align*}
\langle &\varphi^-(f_1 \otimes f_2), \varphi^+(v_1 \otimes v_2) \rangle =
\langle \varphi^-_1(f_1) \otimes \varphi^-_2(f_2), \varphi^+_1(v_1) \otimes \varphi^+_2(v_2) \rangle \\
&= \eta_{f_2,v_1} \langle \varphi^-_1(f_1), \varphi^+_1(v_1) \rangle
\langle \varphi^-_2(f_2), \varphi^+_2(v_2) \rangle
= \eta_{f_2,v_1} \langle f_1, v_1 \rangle \langle f_2, v_2 \rangle \\
&= \langle f_1 \otimes f_2, v_1 \otimes v_2 \rangle,
\end{align*}
which proves that $\langle\cdot,\cdot\rangle$ is $\varphi$-invariant. On the other hand,
\begin{align*}
\{ & \varphi^-(f_1 \otimes f_2), \varphi^+(v_1 \otimes v_2), \varphi^-(g_1 \otimes g_2) \} \\
&= \{ \varphi^-_1(f_1) \otimes \varphi^-_2(f_2),
\varphi^+_1(v_1) \otimes \varphi^+_2(v_2), \varphi^-_1(g_1) \otimes \varphi^-_2(g_2) \} \\
&= \eta_{f_2,v_1} \{ \varphi^-_1(f_1), \varphi^+_1(v_1), \varphi^-_1(g_1) \}
\otimes \langle \varphi^-_2(f_2), \varphi^+_2(v_2) \rangle \varphi^-_2(g_2) \\
&\quad + \eta_{f_2,v_1} \eta_{g_1,f_2}\eta_{g_1,v_2}
\langle \varphi^-_1(f_1), \varphi^+_1(v_1) \rangle \varphi^-_1(g_1)
\otimes \{ \varphi^-_2(f_2), \varphi^+_2(v_2), \varphi^-_2(g_2) \} \\
&= \eta_{f_2,v_1} \varphi^-_1(\{ f_1, v_1, g_1 \}) \otimes \varphi^-_2(\langle f_2, v_2 \rangle g_2) \\
& \quad + \eta_{f_2,v_1} \eta_{g_1,f_2}\eta_{g_1,v_2}
\varphi^-_1(\langle f_1, v_1 \rangle g_1) \otimes \varphi^-_2(\{ f_2, v_2, g_2 \}) \\
&= (\varphi^-_1 \otimes \varphi^-_2) \Big(
\eta_{f_2,v_1} \{ f_1, v_1, g_1 \} \otimes \langle f_2, v_2 \rangle g_2 \\
& \qquad\qquad\qquad + \eta_{f_2,v_1} \eta_{g_1,f_2}\eta_{g_1,v_2}
\langle f_1, v_1 \rangle g_1 \otimes \{ f_2, v_2, g_2 \} \Big) \\
&= \varphi^-(\{ f_1 \otimes f_2, v_1 \otimes v_2, g_1 \otimes g_2 \}),
\end{align*}
and similarly we get 
$$ \{ \varphi^+(v_1 \otimes v_2), \varphi^-(f_1 \otimes f_2), \varphi^+(w_1 \otimes w_2) \}
= \varphi^+(\{ v_1 \otimes v_2, f_1 \otimes f_2, w_1 \otimes w_2 \}). $$
We have proven that $\varphi\in\Aut_R(\cV_R,\langle\cdot,\cdot\rangle)$.
\end{proof}

\begin{remark}
We can define a tensor product for automorphism group schemes in $\FLSM$, analogous to \eqref{tensorAutSchemesPairs}, given by
\begin{equation} \begin{split}
\big( \bAut(L_1,M_1) \otimes_{\bG_m} \bAut(L_2,M_2) \big) (R) :=
\{ & (\varphi_1 \times \varphi_2, \varphi_1^+ \otimes \varphi_2^+) \med \\
& (\varphi_i, \varphi_i^+) \in\Aut_R \big( (L_i)_R, (M_i)_R \big) \}.
\end{split} \end{equation}
By Th.~\ref{correspondenceTheorem}, property Prop.~\ref{tensorProductProperties}-4) is equivalent to
\begin{equation}
\bAut(L_1,M_1,b_1) \otimes_{\bG_m} \bAut(L_2,M_2,b_2) \leq \bAut(L,M,b),
\end{equation}
in $\FLSM$, where $M = M_1 \otimes M_2$, $b = b_1 \perp b_2$, and $L$ is a quotient of $L_1 \oplus L_2$.
\end{remark}

\bigskip

\begin{proposition}
Assume that the base field is algebraically closed. Let $\cV$ be an object in $\GJSP$ and $L = \instr(\cV)$. Assume that $\cV^\sigma$ is an irreducible $L$-supermodule for $\sigma = \pm$, and also that $L = L_1 \oplus L_2$ where $L_1$ and $L_2$ are $b$-orthogonal graded ideals of $L$. Then there are objects $\cV_i$ in $\GJSP$ such that $\cV \cong \cV_1 \otimes \cV_2$, $\instr(\cV_i) \cong L_i$, and $\cV_i^\sigma$ is an irreducible $L_i$-supermodule for $i = 1,2$ and $\sigma = \pm$.
\end{proposition}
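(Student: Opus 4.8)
The plan is to use the Faulkner correspondence (Theorem~\ref{correspondenceTheorem}) to convert the hypothesis about the decomposition of $L=\instr(\cV)$ into a statement about objects in $\GJSP$, building the factors $\cV_i$ from the pieces $L_i$. Since $\cV$ is an object in $\GJSP$, Proposition~\ref{restrictionTheorem} identifies $(L,M,b)$ as its associated object in $\FLSM$, where $M=\cV^+$, $M^\gets=\cV^-$, and $b$ is the bilinear form induced by $\langle\cdot,\cdot\rangle$. First I would use the hypothesis $L=L_1\oplus L_2$ with $L_1,L_2$ being $b$-orthogonal graded ideals to see that $b$ restricts to a nondegenerate form $b_i$ on each $L_i$ (nondegeneracy of $b_i$ follows from nondegeneracy of $b$ together with $b$-orthogonality, since otherwise $L_i^\perp\cap L_i\neq 0$ would produce a nonzero element in the radical of $b$).

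\textbf{Producing the supermodule factors.} The key step is to decompose the irreducible $L$-supermodule $M=\cV^+$ as an (external) tensor product $M\cong M_1\otimes M_2$ of irreducible $L_i$-supermodules $M_i$, where $L_i$ acts trivially on $M_j$ for $i\neq j$. This is where algebraic closure of the base field is essential: for a Lie superalgebra which is a direct sum of ideals $L_1\oplus L_2$ acting irreducibly on $M$, a super-analogue of the standard result (that an irreducible module for a direct sum of algebras over an algebraically closed field factors as an outer tensor product of irreducibles) gives the required $M_i$, each irreducible as an $L_i$-supermodule. I would then set $M_i^\gets=\cV_i^-$ as the dual $L_i$-supermodule and check that $M^\gets\cong M_1^\gets\otimes M_2^\gets$, which is exactly the duality property $(M\otimes N)^\gets\cong M^\gets\otimes N^\gets$ already established in Section~\ref{defsSection}. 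Each $M_i$ is automatically faithful as an $L_i$-supermodule after passing to $\instr(L_i,M_i)$; concretely, by Proposition~\ref{instrTheorem} applied to $(L_i,M_i,b_i)$, the supermodule $M_i$ is faithful over $\instr(L_i,M_i)$, and I would argue $\instr(L_i,M_i)\cong L_i$ using that $M$ is faithful over $L$ and that the trivial cross-action forces the inner structure brackets $[f_i,v_i]$ to land in $L_i$.

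\textbf{Assembling the tensor product and verifying the bilinear forms.} With $(L_i,M_i,b_i)\in\FLSM$ in hand, Proposition~\ref{GJP} produces objects $\cV_i=\cV_{L_i,M_i}$ in $\GJSP$ with $\instr(\cV_i)\cong L_i$ and with $\cV_i^\sigma=M_i^{(\sigma)}$ irreducible $L_i$-supermodules. By the Definition of the tensor product in Section~\ref{sectionTensor}, the object associated to $(L_1\oplus L_2,\,M_1\otimes M_2,\,b_1\perp b_2)$ is precisely $\cV_1\otimes\cV_2$. It then remains to identify this with $\cV$: I would match the triple products and bilinear forms. The bilinear form of $\cV_1\otimes\cV_2$ is the tensor superproduct $\langle f_1\otimes f_2,v_1\otimes v_2\rangle=\eta_{f_2,v_1}\langle f_1,v_1\rangle\langle f_2,v_2\rangle$ by Proposition~\ref{tensorProductProperties}(1), and this agrees with the form on $\cV$ under the pairing isomorphism $M^\gets\cong M_1^\gets\otimes M_2^\gets$; the triple products then agree automatically because both are recovered from $b=b_1\perp b_2$ and the action via \eqref{tripleProductsFromAction}, and the Faulkner correspondence is bijective on isomorphy classes.

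\textbf{The main obstacle.} The hard part will be the super version of the tensor-product factorization of the irreducible module $M$: over a field of characteristic different from $2$ the classical Clifford-theoretic argument for outer tensor decomposition of irreducibles over a direct sum of ideals must be adapted to the $\ZZ_2$-graded (super) setting, where Schur's lemma can yield a division superalgebra of endomorphisms rather than just scalars even over an algebraically closed field (the odd part of $\undEnd_L(M)$ may be nonzero). I would need to either invoke a super-Schur lemma guaranteeing that the relevant commutant is a field (using irreducibility of $\cV^\sigma$ for \emph{both} $\sigma=\pm$ to rule out the type-Q phenomenon), or argue directly that the $b$-orthogonality of $L_1,L_2$ forces the tensor factorization of $M$ to be compatible with the grading and the duality. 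Establishing that $\instr(\cV_i)\cong L_i$ exactly (rather than a proper quotient) also requires care, since tensor products can collapse the inner structure superalgebra as noted in the Definition in Section~\ref{sectionTensor}; here irreducibility of the factors $M_i$ together with nondegeneracy of $b_i$ is what prevents the collapse.
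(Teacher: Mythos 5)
Your overall strategy coincides with the paper's: both arguments reduce, via the Faulkner correspondence, to the claim that a finite-dimensional irreducible supermodule $V$ over $L = L_1 \oplus L_2$ (a direct sum of graded ideals) factors as an outer tensor product $M_1 \otimes M_2$ of irreducible $L_i$-supermodules with trivial cross-actions, and both then reassemble the objects $(\cV_i,\langle\cdot,\cdot\rangle_i)$ from $(L_i,M_i,b_i)$ using Propositions~\ref{GJP} and~\ref{instrTheorem} and the definition of the tensor product in Section~\ref{sectionTensor}. Your handling of the peripheral steps (nondegeneracy of the restrictions $b_i$, faithfulness of $M_i$ over $L_i$ and hence $\instr(L_i,M_i)=L_i$, matching of the pairings via $(M\otimes N)^\gets \cong M^\gets \otimes N^\gets$) is sound and in fact more explicit than the paper, which compresses all of this into the single sentence ``by Faulkner's correspondence, it suffices to prove\dots''.

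The gap is that the factorization lemma itself --- which you correctly single out as ``the main obstacle'' --- is never proved: you describe two possible strategies without executing either, and this lemma is the \emph{entire} content of the paper's proof. The paper carries out the classical Clifford-theoretic argument in the super setting: pick an irreducible $L_1$-subsupermodule $W$ of $V$; since $L_1$ is an ideal and $V$ is irreducible, $V\cong W^{\oplus n}$ as $L_1$-supermodules; Schur's lemma over the algebraically closed field gives $\dim\undHom_{L_1}(W,V)=n$; the space $\undHom_{L_1}(W,V)$ is made into an $L_2$-supermodule by $(t\cdot f)(w):=t\cdot f(w)$, using $[L_1,L_2]=0$ to check $L_1$-equivariance of $t\cdot f$; and the evaluation map $\undHom_{L_1}(W,V)\otimes W\to V$ is then an isomorphism of $L$-supermodules by a dimension count, with irreducibility of the $L_2$-factor inherited from that of $V$. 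Your worry about super-Schur --- that $\undEnd_{L_1}(W)$ could be a two-dimensional division superalgebra (the type-Q phenomenon), so that the commutant need not reduce to scalars even over an algebraically closed field --- is legitimate and is exactly the point where the super case departs from the classical one; the paper's proof invokes Schur's lemma without addressing it, so to complete your argument you would still have to either rule out odd endomorphisms of $W$ (for instance by exploiting that both $\cV^+$ and $\cV^-$ are irreducible and nondegenerately paired) or verify that the evaluation-map argument survives a two-dimensional commutant. Until that central step is supplied, what you have is a correct plan rather than a proof.
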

\begin{proof}
By Faulkner's correspondence, it suffices to prove that if $V$ is a finite-dimensional irreducible (super)module for some finite-dimensional Lie (super)algebra $L = L_1 \oplus L_2$, where $L_1$ and $L_2$ are graded ideals of $L$, then $V \cong M_1 \otimes M_2$ where $M_i$ is an $L_i$-(super)module and where the action of $L_i$ on $M_j$ is trivial for $i\neq j$. Actually, this result is well-known at least for the non-super case, but in absence of a good reference, a proof will be given. (The author is grateful to A.~Elduque for providing a sketch of the proof for the non-super case.)

\smallskip

Denote $S = L_1$, $T = L_2$. It is clear that $[S, T] = 0$. Let $W$ be an irreducible $S$-subsupermodule of $V$. Since $V$ is irreducible and finite-dimensional, it decomposes as a direct sum of $n$ $S$-supermodules isomorphic to $W$, for some $n\in \NN$. Since $\FF$ is algebraically closed, it follows by Schur's Lemma that $\undHom_S(W,V)$ is $n$-dimensional, so that $\dim L = \dim(\undHom_S(W,V) \otimes W)$. We claim that $\undHom_S(W,V)$ is a $T$-supermodule with action given by $(t \cdot f)(w) := t \cdot f(w)$ for each $t\in T$, $f\in\undHom_S(W,V)$, $w\in W$. For homogeneous $s\in S$, $t \in T$, $f\in \undHom_S(W,V)$, $w \in W$, we have that
\begin{align*}
(t \cdot f)(s \cdot w) &= t \cdot f(s \cdot w) = \eta_{f,s} t \cdot \big(s \cdot f(w)\big) \\
&= \eta_{f,s} \Big( [t,s] \cdot f(w) + \eta_{t,s} s \cdot \big(t \cdot f(w)\big) \Big) \\
&= \eta_{f,s}\Big(0 +  \eta_{t,s} s \cdot \big( (t \cdot f)(w) \big) \Big) \\
&= \eta_{t \cdot f, s} s \cdot \big( (t \cdot f)(w) \big),
\end{align*}
so that $t \cdot f \in \undHom_S(W,V)$ with $\varepsilon(t \cdot f) = \varepsilon(t) + \varepsilon(f)$. On the other hand, for homogeneous elements $x,y\in T$, $f \in \undHom_S(W,V)$, $w \in W$, we have
\begin{align*}
([x,y] \cdot f)(w) &= [x,y] \cdot f(w) = x \cdot (y \cdot f(w)) - \eta_{x,y} y \cdot (x \cdot f(w)) \\
&= \big( x\cdot(y\cdot f)\big)(w) - \eta_{x,y} \big(y\cdot(x\cdot f)\big)(w) \\
&= \big(x\cdot(y\cdot f) - \eta_{x,y} y\cdot(x\cdot f)\big)(w).
\end{align*}
We have proven that $\undHom_S(W,V)$ is a $T$-supermodule. We can consider the trivial actions $T \cdot W = 0$ and $S \cdot \undHom_S(W,V) = 0$, so that $W$ and $\undHom_S(W,V)$ become $L$-supermodules.

Furthermore, the map $\phi \colon \undHom_S(W,V) \otimes W \to V$, $f \otimes w \mapsto f(w)$, is a homomorphism of $L$-supermodules. By dimensions and since $V$ is irreducible, it follows that $\phi$ is a bijection. We have proven that $V$ and $\undHom_S(W,V) \otimes W$ are isomorphic $L$-supermodules. Note that for each $T$-subsupermodule $N$ of $\undHom_S(W,V)$, we have that $N \otimes W$ is an $L$-subsupermodule of $\undHom_S(W,V) \otimes W$; hence, since $V$ is an irreducible $L$-supermodule, it follows that the $T$-supermodule $\undHom_S(W,V)$ is irreducible too.
\end{proof}

\begin{notation}
Let $(\cV, \langle\cdot,\cdot\rangle)$ be a $1$-dimensional object in $\GJSP$ (i.e., $\dim \cV^+ = \dim \cV^- = 1$), and $(L, M, b)$ the associated object in $\FLSM$. Since $\langle\cdot,\cdot\rangle$ is nondegenerate and homogeneous for the $\ZZ_2$-grading, it follows that $\cV$ is either a pair or an antipair. Thus we can consider its parity, denoted by $\varepsilon(\cV)$ or $\varepsilon(M)$, and set $\eta(\cV) = \eta(M) := (-1)^{\varepsilon(\cV)}$. We will also denote $\eta_a := (-1)^a$ for $a \in \ZZ_2$.

Take $f \in \cV^-$, $v \in \cV^+$ such that $\langle v, f \rangle = 1$, or equivalently, $\eta(\cV) \langle f, v \rangle = 1$ because $\eta_{f,v} = \eta(\cV)$. Take the element $x = \nu(v,f) = - \eta(\cV) \nu(f,v) \in L$. Since any other choice for the pair $(f, v)$ has the form $(\alpha^{-1} f, \alpha v)$ for some $\alpha \in \FF^\times$, it follows that the element $x$ does not depend on the choice. Also, $\varepsilon(x) = \varepsilon(f) + \varepsilon(v) = \bar 0$ if $x \neq 0$. Let $\lambda \in \FF$ be the eigenvalue of the action of $x$ on $\cV^+ = M$ (thus $x$ acts on $\cV^- = M^*$ with multiplier $-\lambda$). Equivalently, $\{v,f,v\} = \lambda v$ and $\{f,v,f\} = \eta(\cV) \lambda f$. Let $G = \FF \times \ZZ_2$. It is clear that the object $\cV$ (resp. $M$) is uniquely determined (up to isomorphism) by $\lambda$ and $\varepsilon(\cV)$, and also by the parameter $\alpha = (\lambda, \varepsilon(\cV)) \in G$, so we can denote $\cV$ (resp. $M$) by $\cV_\alpha$ (resp. $M_\alpha$). Also, note that $\lambda = 0$ iff $x = 0$ iff $L = 0$. 

\smallskip

Given $\alpha = (\lambda, a), \beta = (\mu, b) \in G$, we claim that
\begin{equation}
\cV_\alpha \otimes \cV_\beta \cong \cV_{\alpha + \beta}.
\end{equation}
Let $(f, v)$ and $(f', v')$ be pairs of elements, as above, in $\cV_\alpha$ and $\cV_\beta$, respectively. Then $(\eta_{v,f'} f \otimes f', v\otimes v')$ is a pair of elements of $\cV = \cV_\alpha \otimes \cV_\beta$ with $\langle v \otimes v', \eta_{v',f} f \otimes f' \rangle = \langle v, f \rangle \langle v', f' \rangle$ = 1. It is clear that $\varepsilon(\cV) = \varepsilon(\cV_\alpha) + \varepsilon(\cV_\beta) = a+b$ and $\eta(\cV) = \eta_{a+b} = \eta_a\eta_b = \eta(\cV_\alpha)\eta(\cV_\beta)$. Then we have that
\begin{align*}
\{v \otimes v', &~ \eta_{v',f} f \otimes f', ~ v \otimes v' \} = \\
&= \{ v,f,v \} \otimes \langle v', f' \rangle v' +
\eta_{v,v'} \eta_{v,f'} \langle v,f \rangle v \otimes \{ v',f',v'\} \\
&= (\lambda v) \otimes v'
+ v \otimes (\mu v') = (\lambda + \mu) v \otimes v'.
\end{align*}
Hence, $\cV \cong \cV_\gamma$ with $\gamma = (\lambda + \mu, a+b) = \alpha + \beta$, which proves the claim. On the other hand, is is easy to see that for any object $\cV \in \GJSP$ we have that
\begin{equation}
\cV_0 \otimes \cV \cong \cV \cong \cV \otimes \cV_0.
\end{equation}

\medskip

Let $\cV$ and $\cW$ be objects in $\GJSP$, and $\alpha \in G$. Denote $\cV^{[\alpha]} := \cV \otimes \cV_\alpha$. We will say that $\cV$ is a \emph{tensor-shift by $\alpha$} of $\cW$ (and that $\cV$ and $\cW$ are \emph{tensor-shift related}) if $\cV \cong \cW^{[\alpha]}$. It is clear that $\cV \cong \cV^{[0]}$ and $(\cV^{[\alpha]})^{[\beta]} \cong \cV^{[\alpha + \beta]}$, and it follows that tensor-shift relation is an equivalence relation. Note that tensor-shifting with an element $\alpha = (\lambda, \bar 1)$ produces a bijection between the isomorphic classes of pairs and antipairs (although this correspondence may not restrict to particular subclasses of $\GJSP$, e.g., Jordan pairs do not correspond to anti-Jordan pairs).

\medskip

Given $\alpha = (\lambda, a) \in G$, we can identify $\cV^{[\alpha]}$ with the vector spaces of $\cV$, with degree map $\varepsilon_{[\alpha]}(x) := \varepsilon(x) + a$, with the new bilinear form
\begin{equation}
\langle f, v \rangle_{[\alpha]} := \eta_a \eta_{a,f} \langle f, v \rangle
= \eta_a \eta_{a,v} \langle f, v \rangle,
\end{equation}
and shifted triple products given by
\begin{equation} \label{general-shifted-products} \begin{split}
\{ x, y, z \}^+_{[\alpha]} &:= \eta_{a,y} (\{ x, y, z \}^+ + \lambda \langle x, y \rangle z), \\
\{ x, y, z \}^-_{[\alpha]} &:= \eta_a \eta_{a,y} (\{ x, y, z \}^- + \lambda \langle x, y \rangle z),
\end{split} \end{equation}
where we denote $\eta_{a,y} = (-1)^{a \varepsilon(y)}$. Using \eqref{auxEq1} it follows easily that
\begin{equation}
\cV \otimes \cV_\alpha \cong \cV_\alpha \otimes \cV.
\end{equation}
Furthermore, it is easy to see, using \eqref{general-shifted-products}, that
\begin{equation}
\bAut(\cV^{[\alpha]}, \langle\cdot,\cdot\rangle_{[\alpha]}) \simeq \bAut(\cV, \langle\cdot,\cdot\rangle),
\end{equation}
that is, tensor-shifts preserve automorphism group schemes.

\medskip

In particular, if $\alpha = (\lambda, \bar 0) \equiv \lambda$, we can identify $\cV^{[\lambda]}$ with the vector spaces of $\cV$, with the same bilinear form, and shifted triple products given by
\begin{equation} \label{tensor-shift}
\{ x, y, z \}_{[\lambda]} := \{ x, y, z \} + \lambda \langle x, y \rangle z.
\end{equation}
\end{notation}

\bigskip

\begin{example}
Denote $\cM_{m,n} := \cM_{m,n}(\FF)$. Recall that simple Jordan pairs of type $I_{p,q}$ ($p \leq q$) are given by $\cV^{(I)}_{p,q} := (\cM_{p,q}, \cM_{p,q})$ with triple products $\{x,y,z\} := x y^\Tr z + z y^\Tr x$, and generic trace $t \colon \cM_{p,q}^- \times \cM_{p,q}^+ \to \FF$, $t(x,y) := t(xy^\Tr)$. It is easy to see that $(\cV^{(I)}_{p,q}, t)$ is an object in $\GJP$ (the invariance and symmetry of the generic trace follow easily from the property $t(xy) = t(yx)$ for the trace of matrices $x \in\cM_{n,m}$, $y \in\cM_{m,n}$).

The bilinear form and triple products in $\cV = \cV^{(I)}_{1,p} \otimes \cV^{(I)}_{1,q}$ on the basis $\{ e_i \otimes e_j \}$ (tensor product of canonical bases) are given by 
$$ \langle e_i \otimes e_j, e_k \otimes e_l \rangle = \delta_{ik}\delta_{jl}, $$
and
\begin{align*}
\{ e_i &\otimes e_j, e_k \otimes e_l, e_m \otimes e_n \} = \\
&= \{ e_i, e_k, e_m \} \otimes t(e_j, e_l)e_n + t(e_i, e_k)e_m \otimes \{ e_j, e_l, e_n \} \\
&= (\delta_{ik}e_m + \delta_{km}e_i) \otimes \delta_{jl}e_n
+ \delta_{ik}e_m \otimes (\delta_{jl}e_n + \delta_{ln}e_j) \\
&= 2\delta_{ik}\delta_{jl}e_m \otimes e_n + \delta_{km}\delta_{jl} e_i \otimes e_n
+ \delta_{ik}\delta_{ln} e_m \otimes e_j.
\end{align*}
Hence, the tensor-shift $\cV^{[-2]}$ has the same bilinear form, and triple products given by
\begin{equation*}
\{ e_i \otimes e_j, e_k \otimes e_l, e_m \otimes e_n \}_{[-2]} =
\delta_{km}\delta_{jl} e_i \otimes e_n + \delta_{ik}\delta_{ln} e_m \otimes e_j.
\end{equation*}

On the other hand, note that the generic trace and triple products on $\cV^{(I)}_{p,q}$ are given by
$$ t(E_{ij}, E_{kl}) = t(E_{ij}E_{lk}) = \delta_{ik}\delta_{jl}, $$
and
\begin{align*}
\{ E_{ij}, E_{kl}, E_{mn} \} = E_{ij}E_{lk}E_{mn} + E_{mn}E_{lk}E_{ij}
= \delta_{jl}\delta_{km} E_{in} + \delta_{nl}\delta_{ki} E_{mj}.
\end{align*}

Consequently, the map $e_i \otimes e_j \mapsto E_{ij}$ defines an isomorphism of (generalized) Jordan pairs
\begin{equation}
\cV^{(I)}_{p,q} \cong (\cV^{(I)}_{1,p} \otimes \cV^{(I)}_{1,q})^{[-2]}
= \cV^{(I)}_{1,p} \otimes \cV^{(I)}_{1,q} \otimes \cV_{-2}.
\end{equation}
\end{example}

\medskip

\begin{example}
Now recall that the simple Jordan pair of type $IV_n$ is given by $\cV^{(IV)}_n := (\FF^n, \FF^n)$, with generic trace form $t(x,y) := q(x,y)$, and triple products $\{ x,y,z \} := q(x,y)z + q(z,y)x - q(x,z)y$, where $q \colon \FF^n \times \FF^n \to \FF$ is a nondegenerate symmetric bilinear form. Without loss of generality, we can assume that $q$ is the standard scalar product. Note that the generic trace and the triple products are given on the canonical basis $\{e_i\}$ by
$t(e_i, e_j) = \delta_{ij}$ and $\{ e_i, e_j, e_k \} = \delta_{ij}e_k + \delta_{kj}e_i - \delta_{ik}e_j$.
Then we have that $\cV^{(IV)}_n$ is an object in $\GJP$ (the invariance and symmetry of the generic trace can be checked easily).

The bilinear form and triple products on $\cV = \cV^{(I)}_{1,n} \otimes \cV^{(IV)}_n$ are given by
$$ \langle e_i \otimes e_j, e_k \otimes e_l \rangle = \delta_{ik}\delta_{jl}, $$
and
\begin{align*}
\{ e_i &\otimes e_j, e_k \otimes e_l, e_m \otimes e_n \} = \\
&= \{ e_i, e_k, e_m \} \otimes t(e_j, e_l)e_n + t(e_i, e_k)e_m \otimes \{ e_j, e_l, e_n \} \\
&= (\delta_{ik}e_m + \delta_{km}e_i) \otimes \delta_{jl}e_n
+ \delta_{ik}e_m \otimes (\delta_{jl}e_n + \delta_{ln}e_j - \delta_{jn}e_l) \\
&= 2\delta_{ik}\delta_{jl}e_m \otimes e_n + \delta_{km}\delta_{jl} e_i \otimes e_n
+ \delta_{ik}\delta_{ln} e_m \otimes e_j - \delta_{ik}\delta_{jn} e_m \otimes e_l.
\end{align*}
Therefore, the tensor-shift $\cV^{[-2]}$ has the same bilinear form, and triple products given by
\begin{equation*}
\{ e_i \otimes e_j, e_k \otimes e_l, e_m \otimes e_n \}_{[-2]} =
\delta_{km}\delta_{jl} e_i \otimes e_n
+ \delta_{ik}\delta_{ln} e_m \otimes e_j - \delta_{ik}\delta_{jn} e_m \otimes e_l.
\end{equation*}

On the other hand, let $\cV_{\cM_n}$ be the Kantor pair associated to the structurable algebra $\cM_n := \cM_n(\FF)$, where the involution is the transposition. The triple products on $\cV_{\cM_n}$ are given by $\{ x,y,z \} = x y^\Tr z + z y^\Tr x - z x^\Tr y$. Consider the bilinear form $t \colon \cM_n^- \times \cM_n^+ \to \FF$ given by $t(x, y) := t(x y^\Tr)$. Then $(\cV_{\cM_n}, t)$ is an object in $\GJP$ (the invariance and symmetry of the bilinear trace form follow easily from the properties $t(xy) = t(yx)$ and $t(x^\Tr) = t(x)$ for $x,y \in\cM_n$).

Note that the bilinear form and triple products of $\cV_{\cM_n}$ are also given by
$$ t(E_{ij}, E_{kl}) = t(E_{ij}E_{lk}) = \delta_{ik}\delta_{jl}, $$
and
\begin{align*}
\{ E_{ij}, E_{kl}, E_{mn} \} &= E_{ij}E_{lk}E_{mn} + E_{mn}E_{lk}E_{ij} - E_{mn}E_{ji}E_{kl} \\
&= \delta_{jl}\delta_{km} E_{in} + \delta_{nl}\delta_{ki} E_{mj} - \delta_{nj}\delta_{ik} E_{ml}.
\end{align*}

It follows that the map $e_i \otimes e_j \mapsto E_{ij}$ defines an isomorphism of generalized Jordan pairs
\begin{equation}
\cV_{\cM_n} \cong (\cV^{(I)}_{1,n} \otimes \cV^{(IV)}_n)^{[-2]}
= \cV^{(I)}_{1,n} \otimes \cV^{(IV)}_n \otimes \cV_{-2}.
\end{equation}
\end{example}

\bigskip

\noindent\textbf{Acknowledgements} \\
Thanks are due to A. Elduque for providing very useful advice and references. Also, I am thankful to the anonymous referee for reading the paper and giving some corrections.


\end{document}